\numberwithin{equation}{section}
\theoremstyle{plain}
\newtheorem{theorem}{Theorem}[section]
\newtheorem{lemma}[theorem]{Lemma}
\newtheorem{proposition}[theorem]{Proposition}
\theoremstyle{definition}
\newtheorem{definition}[theorem]{Definition}
\newtheorem{remark}[theorem]{Remark}
\newtheorem{example}[theorem]{Example}
\newcommand{\E}{\mathbb{E}}
\newcommand{\ud}{\ensuremath{\mathrm{d}}}
\newcommand{\Floor}[1]{\left\lfloor #1 \right\rfloor}
\newcommand{\Norm}[1]{\left|\left|  #1   \right|\right|}
\newcommand{\Itos}{It\^{o}'s }
\newcommand{\calB}{\mathcal{B}}
\newcommand{\calF}{\mathcal{F}}
\newcommand{\calK}{\mathcal{K}}
\newcommand{\calL}{\mathcal{L}}
\newcommand{\calM}{\mathcal{M}}
\newcommand{\calN}{\mathcal{N}}
\newcommand{\calP}{\mathcal{P}}
\newcommand{\calS}{\mathcal{S}}
\newcommand{\bbN}{\mathbb{N}}
\newcommand{\bbZ}{\mathbb{Z}}
\newcommand{\R}{\mathbb{R}}
\DeclareMathOperator{\Lip}{\mathit{L}}
\DeclareMathOperator{\lip}{\mathit{l}}
\title{Nonlinear stochastic heat equation driven by spatially colored noise: moments and intermittency}
\author{
{\bf Le Chen\footnote{
Research partially supported by a fellowship from Swiss National Science Foundation (P2ELP2\_151796).}}
% \quad {\bf Davar Khoshnevisan}\footnote{}
\quad
and 
\quad {\bf Kunwoo Kim}\footnote{Research supported by the NSF Grant No. 0932078000 while the author was in residence at the Mathematical Sciences Research Institute in Berkeley, California, during the Fall 2015 semester.}
% \footnote{Research supported in part by the NSF grants DMS-1307470.}
\\[1em]
University of Utah
\date{\vspace{0em}\small \today}
% \date{}
}
\begin{document}
\maketitle
\begin{abstract}
In this paper, we study the stochastic heat equation in the spatial domain $\R^d$ subject to
a Gaussian noise which is white in time and colored in space.
The spatial correlation can be
any symmetric, nonnegative and nonnegative-definite function
that satisfies {\it Dalang's condition}.
We establish the existence and uniqueness of a random field solution starting from measure-valued initial data.
We find the upper and lower bounds for the second moment.
As a first application of these moments bounds,
we find the necessary and sufficient conditions for the solution to have phase transition for the second
moment Lyapunov exponents.
As another application, we prove a localization result for the intermittency fronts.

	\noindent{\it Keywords.} The stochastic heat equation; moment estimates;
	intermittency; intermittency front; measure-valued initial data; Lyapunov exponents.\\

	\noindent{\it \noindent AMS 2010 subject classification.}
	Primary 60H15; Secondary 35R60, 60G60.
\end{abstract}%\newpage

% \tableofcontents
\setlength{\parindent}{1.5em}

\section{Introduction}

In this paper, we will study the following stochastic heat equation,
\begin{align}\label{E:SHE}
\begin{cases}
\displaystyle \left(\frac{\partial }{\partial t} - \frac{\nu}{2}\:
\Delta \right) u(t,x) =  \rho(u(t,x))
\:\dot{M}(t,x),&
x\in \R^d,\; t \in\R_+^*:=\:]0,\infty[\:, \\
\displaystyle \quad u(0,\circ) = \mu(\circ)
\end{cases}
\end{align}
where $\dot{M}$ is a Gaussian noise (white in time and homogeneously colored in space),
$\nu>0$ is the diffusion parameter,
$\rho$ is a globally Lipschitz continuous function satisfying linear growth condition:
\[
\Lip_\rho:= \sup_{z\in\R}\frac{|\rho(z)|}{|z|}<\infty.
\]
% $|\rho(z)|\le \Lip_\rho |z|$ for all
% $z\in\R$ ($\Lip_\rho>0$),
The initial data $\mu$ is a deterministic and locally finite (regular) Borel measure.
Informally,
\[
\E\left[\dot{M}(t,x)\dot{M}(s,y)\right] = \delta_0(t-s)f(x-y)
\]
where $\delta_0$ is the Dirac delta measure with unit mass at zero and $f$ is a ``correlation function'' (i.e., a nonnegative, nonnegative definite, and symmetric function that is not identically
zero).
The Fourier transform of $f$ is denoted by $\hat{f}$
\[
\hat{f}(\xi)= \calF f (\xi) =\int_{\R^d}\exp\left(- i \: \xi\cdot x \right)f(x)\ud x.
\]
In general, $\hat{f}$ is again a nonnegative and nonnegative-definite measure, which is usually called  the {\it spectral measure}.
When $\hat{f}$ is genuinely a measure, $\hat{f}(\xi)\ud \xi$ is to be understood as $\hat{f}(\ud \xi)$.
For existence of a random field solution to \eqref{E:SHE},
a necessary condition for the correlation function $f$ is {\it Dalang's} condition \cite{Dalang99,FK09EJP}:
\begin{align}\label{E:Dalang}
\Upsilon(\beta):=(2\pi)^{-d}\int_{\R^d} \frac{\hat{f}(\ud \xi)}{\beta+|\xi|^2}<+\infty \quad \text{for some and hence for all $\beta>0$.}
\end{align}
For the lower bound of the second moment, we will need a stronger assumption on $\rho$:
\begin{align}\label{E:lip}
\lip_\rho:=\inf_{x\in\R} \frac{\rho(x)}{|x|}>0.
\end{align}

There are two main contributions in this paper.
The first one is that the initial data can be Borel measures such as the Dirac delta measure.
The second contribution is that we obtain point-wise moment formulas of the following nature (e.g., $\rho(u)=\lambda u$):
\begin{align}\label{E:MFT}
\E\left[u(t,x)u(t,x')\right]
=\lambda^{-2}\iint_{\R^{2d}} \mu(\ud z)\mu(\ud z') \: \calK(t,x-z,x'-z';z'-z),
\end{align}
for some kernel function $\calK$. When $\rho$ is nonlinear, the above moment formula
turns into lower and upper bounds.
With these moment bounds, we are able to prove several equivalent conditions
for phase transition for the second moment Lyapunov exponents and
we are able to establish existence of intermittency fronts.

\bigskip

In order to state our main results, we need to introduce some notation.
Note that by the Jordan decomposition, the initial measure $\mu$ can be decomposed as $\mu=\mu_+-\mu_-$ where
$\mu_\pm$ are two non-negative Borel measures with disjoint support. Denote $|\mu|:= \mu_++\mu_-$.
The requirement for the initial measure $\mu$ is that
\begin{align}\label{E:J0finite}
\int_{\R^d} e^{-a |x|^2} |\mu|(\ud x)
<+\infty\;, \quad \text{for all $a>0$}\;,
\end{align}
where $|x|^2=x_1^2+\dots+x_d^2$.
Denote this set of measures by $\calM_H\left(\R^d\right)$.
The solution to the homogeneous equation is
\[
J_0(t,x):= \left(\mu* G(t,\circ)\right)(x)= \int_{\R^d}G(t,x-y)\mu(\ud y)
\]
where
\[
G(t,x)= \left(2\pi\nu t\right)^{-d/2} \exp\left(-\frac{|x|^2}{2\nu t}\right).
\]
Then the condition \eqref{E:J0finite} is equivalent to that $J_0(t,x)$ with $\mu$ replaced by $|\mu|$ is finite for all
$t>0$ and $x\in\R^d$.
This is an extension of the work \cite{ChenDalang13Heat} from $\R$ to $\R^d$.
If the initial measure has a bounded density, then {\it Dalang's} condition \eqref{E:Dalang}
is a necessary and sufficient condition for \eqref{E:SHE} to have a random field solution;
see \cite{Dalang99,FK13SHE, FK09EJP, KK13LCA}.
We will show that this statement is still true for all initial measures in $\calM_H(\R^d)$,
provided that
% the following assumption holds:
% \begin{assumption}\label{A:u>0}
either $\rho(u)=\lambda u$ is linear or
the {\it weak positivity (comparison) principle} holds, namely,
\begin{align}\label{E:WeakCom}
\text{$u(t,x)\ge 0$ a.s. for all $t>0$ and $x\in\R^d$ whenever $\mu\ge 0$.}
\end{align}
% \end{assumption}
Note that the comparison principle is true with some restrictions either on initial data or on the spatial correlation function $f$,
or on both; see \cite{ChenKim14Comp,Mueller91,TessitoreZabczyk98}.
The most general form, i.e., the one under \eqref{E:Dalang} and for $\mu\in\calM_H(\R^d)$, $\mu\ge 0$,
will be a separate project.
The weak comparison principle \eqref{E:WeakCom} is assumed throughout this paper.
Along the proof of existence of solution, we obtain
a moment formula of the type in \eqref{E:MFT}.

\bigskip
Using the moment formula \eqref{E:MFT}, we will study the asymptotic behaviors of the solution.
We first define the {\it upper and lower (moment) Lyapunov exponents of order $p$} ($p\ge 2$) by
\begin{align}\label{E:Lypnv-x}
\overline{m}_p(x) :=\mathop{\lim\sup}_{t\rightarrow+\infty}
\frac{1}{t}\log\E\left(|u(t,x)|^p\right),\quad
\underline{m}_p(x) :=\mathop{\lim\inf}_{t\rightarrow+\infty}
\frac{1}{t}\log\E\left(|u(t,x)|^p\right).
\end{align}
% Note that both $\underline{m}_p$ and $\overline{m}_p$ do not depend on $x$.
If the initial data are homogeneous (i.e., $\mu(\ud x)=C\ud x$ for some constant $C\in\R$),
then both $\overline{m}_p(x)$ and $\underline{m}_p(x)$ do not depend on $x$.
In this case, a solution is called {\it fully intermittent}
if $\underline{m}_2>0$ and $m_1=0$ by Carmona and Molchanov \cite[Definition
III.1.1]{CarmonaMolchanov94}.
See \cite{khosh1} for a detailed discussion of the meaning of this intermittency property.
For non-homogeneous initial data, we call
a solution fully intermittent if $\inf_{x\in\R^d}\underline{m}_2(x)>0$ and $m_1(x)\equiv 0$.

%  Informally, it means that the sample paths of $u(t,x)$ exhibit
% ``high peaks" separated by ``large valleys".

% \textcolor{red}{(It seems to me that below is some history about the second moment estimates. Should we mention the results by Noble and also Carmona and Molchanov here? I think the result for the second moment here is the complete result for the stochastic heat equations in $\R^d$, which is a slight generalization of Noble and extension of Carmona and Molchanov to $\R^d$. I think that it may be better to mention that our techniques are totally real analytic here).}
% \textcolor{blue}{(Yes, let's come up with some sentences for their work as well. By the way, shall we mention the
% recent work by Jingyu Huang, Yaozhong Hu, and David Nualart on the growth indices?)}

% Define

Foondun and Khoshnevisan proved in \cite[Theorem 1.8]{FK13SHE} and \cite[Theorem 2]{FK14Cor} that
if the correlation function $f$ satisfies Dalang's condition (and some other mild conditions),
and if $\rho(x)$ satisfies \eqref{E:lip}, then for constant initial data,
when $\lip_\rho$ is sufficiently large,
the second moment of the solution to \eqref{E:SHE} has at least exponential growth in time:
\begin{align}\label{E:m2}
\inf_{x\in\R^d} \overline{m}_2(x)>0.
\end{align}
We will show that the condition that $\lip_\rho$ should be sufficiently large
is necessary in certain situations. Moreover, we will strengthen
the statement \eqref{E:m2} into $\inf_{x\in\R^d} \underline{m}_2(x)>0$.

More generally, we will study the necessary and sufficient conditions for {\it the phase transition} of the second moment Lyapunov exponents.
% \textcolor{red}{Note that for each $\rho$, it always holds that $\lip_\rho\le \Lip_\rho$.}
Under \eqref{E:lip}, we call the solution $u(t,x)$ to \eqref{E:SHE}
has a {\it second order phase transition} if there exist two nonnegative constants
$0<\underline{\lambda}_c\le \overline{\lambda}_{c}<\infty$, such that
\begin{align}\label{E:Phase}
\begin{cases}
\sup_{x\in\R^d}\overline{m}_2(x)=0 & \text{if $(\lip_\rho\le)\:\Lip_\rho< \underline{\lambda}_c$,}\\[0.5em]
\inf_{x\in\R^d}\underline{m}_2(x)>0 & \text{if $\overline{\lambda}_c<\lip_\rho \: (\le \Lip_\rho)$.}
\end{cases}
\end{align}
% \textcolor{red}{(Here, I still cannot understand  the definition of the phase transition since $\lip_\rho$ and $\Lip_\rho$ seem to be fixed constants not parameters. In addition, it does not make sense that $\Lip_\rho\geq \lip_\rho$ but $\underline{\lambda}_c\leq \overline{\lambda}_c$. Just one more thing: It seems that $\Lip_\rho$ can be any large constant. Could you check this?)}
Here the two parameters $\Lip_\rho$ and $\lip_\rho$ play the role of $\lambda$ for the
Anderson model $\rho(u)=\lambda u$.
We will prove that the phase transition happens if and only if
\begin{align}\label{E:iff1}
\Upsilon(0):=\lim_{\beta\rightarrow 0}\Upsilon(\beta)<\infty.
\end{align}
As a consequence, we have the following statements:
\begin{enumerate}
\item No phase transition happens when $d=1$ or $2$.
% \item If $f(0)=\infty$, then there is no phase transition.
% \item No phase transition happens if the tails of $f$ is too heavy, i.e.,
% $\int_{\R^d} \frac{f(z)}{|z|^{d-2}}\ud z=\infty$.
\item Phase transition happens if and only if
\begin{align}\label{E:iff2}
d\ge 3\quad\text{and}\quad
\int_{\R^d} \frac{f(z)}{|z|^{d-2}}\ud z<\infty.
\end{align}
\item Let $B_t$ be a Brownian motion on $\R^d$ starting from the origin with $\E(|B_t|^2)=\nu t$. Define, for $t>0$,
\begin{align}
\label{E:h1}
\begin{aligned}
k(t)&:=\E(f(B_t))= \int_{\R^d} f(z) G(t,z) \ud z,\\
 h_1(t)&:=\E\left[\int_0^t f(B_s) \ud s\right]=\int_0^t k(s) \ud s.
\end{aligned}
\end{align}
Then \eqref{E:iff1} holds if and only if
\begin{align}\label{E:iff3}
\lim_{t\rightarrow \infty} h_1(t)<\infty.
\end{align}
\end{enumerate}

\begin{remark}
It is well-known that if $f\in \calS(\R^d)$ (the Schwartz test function), then
\[
h_1(\infty)=C_1 \int_{\R^d}\frac{f(x)}{|x|^{d-2}}\ud x = C_2 \int_{\R^d} \frac{\hat{f}(\xi)}{|\xi|^2}\ud \xi=C_3 \Upsilon(0);
\]
see. e.g., \cite[Lemma 2, Chapter 5]{Stein70Singular} for the second equality.
In this case, the equivalence of \eqref{E:iff1}, \eqref{E:iff2}, and \eqref{E:iff3} is clear.
\end{remark}

\begin{remark}
Condition \eqref{E:iff2} sets restrictions on the behaviors of $f$ both at the infinity and around zero.
In particular, when $d\ge 3$, in order to have phase transition,
the local integrability of $f$ around zero is not enough, and
the tails should not be too fat.
% \textcolor{red}{(I think that the local integrability of $f$ around 0 comes from Dalang's condition. If this were true, then the condition for the phase transition may only come from the decay rate of $f$ at $\infty$.)}
% \textcolor{blue}{(Yes)}
Note that the local integrability of $f$ in \eqref{E:iff2} is stronger than that in Dalang's condition \eqref{E:Dalang}.
\end{remark}

We summarize these results in the following theorem.
Recall that $\mu>0$ means that $\mu\ge 0$ (nonnegative) and $\mu\ne 0$ (non-vanishing).

\begin{theorem}\label{T:Phase}
Suppose that the initial data $\mu\in\calM_H(\R)$ is such that $\mu\ge 0$ and
\begin{align}\label{E:InitGrow}
\int_{\R^d} e^{-\beta |x|} \mu(\ud x)<+\infty\quad\text{for all $\beta>0$.}
\end{align}
If $\Upsilon(0)<\infty$, then \eqref{E:Phase} holds.
On the other hand, if $\Upsilon(0)=\infty$ and $\mu> 0$,
% \textcolor{red}{(Should we define  what it means by $\mu>0$ precisely before? It seems that what we need is $\mu$ being non-negative and non-vanishing.)}
then the solution $u(t,x)$ is fully intermittent.
Moreover,
\begin{align}
 \text{Condition \eqref{E:iff1}}
\Longleftrightarrow
\text{Condition \eqref{E:iff2}}
\Longleftrightarrow
\text{Condition \eqref{E:iff3}}.
\end{align}
\end{theorem}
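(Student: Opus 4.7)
The plan has three parts corresponding to the three assertions of the theorem: (a) the equivalence of \eqref{E:iff1}, \eqref{E:iff2}, \eqref{E:iff3} via Fubini and Fourier/Green-function computations; (b) the upper-side of the phase transition; (c) the lower-side of the phase transition together with full intermittency when $\Upsilon(0)=\infty$.

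For the equivalences, I would first handle \eqref{E:iff3}$\Leftrightarrow$\eqref{E:iff1} by interchanging integrals: since $\hat{G}(s,\xi) = e^{-\nu s|\xi|^2/2}$ and $f$ is symmetric, Parseval gives $k(s) = (2\pi)^{-d}\int e^{-\nu s |\xi|^2/2}\hat{f}(\ud\xi)$, and Fubini then yields $h_1(\infty) = (2/\nu)\,\Upsilon(0)$, so the two are simultaneously finite. For \eqref{E:iff3}$\Leftrightarrow$\eqref{E:iff2}, I would use $\int_0^\infty G(s,z)\,\ud s = c_d\,|z|^{-(d-2)}$ when $d\ge 3$ (and observe that both sides diverge for any nonzero $f$ when $d\le 2$) and swap the order of integration in $h_1(\infty)=\int_0^\infty k(s)\,\ud s$.

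For the upper side of \eqref{E:Phase}, I would feed the upper bound implicit in \eqref{E:MFT} into a Laplace-transform argument in the spirit of Foondun--Khoshnevisan. The iterated convolutions in $\calK$ satisfy a renewal-type inequality whose Laplace transform reduces, after using Parseval as above, to the algebraic condition $\Lip_\rho^2\,\hat{k}(\gamma) \le 1$, where $\hat{k}(\gamma) = (2/\nu)\,\Upsilon(2\gamma/\nu)$ is continuous and decreasing in $\gamma$. Under \eqref{E:iff1}, $\hat{k}(0)=(2/\nu)\Upsilon(0)<\infty$, so when $\Lip_\rho^2 < \nu/(2\Upsilon(0))$ the inequality holds for all $\gamma\ge 0$, which forces $\int_0^\infty e^{-\gamma t}\E[u(t,x)^2]\,\ud t<\infty$ for every $\gamma>0$ and hence $\sup_x \overline{m}_2(x)\le 0$; combining with $m_1(x)=J_0(t,x)$ gives $\sup_x\overline{m}_2(x)=0$. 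Setting $\underline{\lambda}_c:= \sqrt{\nu/(2\Upsilon(0))}$ gives the first half of \eqref{E:Phase}.

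For the lower side (and full intermittency), I would apply the same Laplace-transform machine to the lower bound in \eqref{E:MFT} with $\lip_\rho$ replacing $\Lip_\rho$. When $\Upsilon(0)<\infty$, the critical threshold becomes $\overline{\lambda}_c:=\sqrt{\nu/(2\Upsilon(0))}$: for $\lip_\rho>\overline{\lambda}_c$ the equation $\lip_\rho^2\,\hat{k}(\gamma^*)=1$ has a strictly positive root $\gamma^*$, which by a renewal/induction argument transfers into $\E[u(t,x)^2]\gtrsim J_0(t,x)^2 e^{\gamma^* t}$, hence $\underline{m}_2(x)\ge \gamma^*$ whenever $J_0(t,x)$ does not vanish. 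When $\Upsilon(0)=\infty$, $\hat{k}(\gamma)\to\infty$ as $\gamma\downarrow 0$, so a positive root $\gamma^*$ exists for \emph{every} $\lip_\rho>0$, and the same argument gives exponential growth; combined with $m_1(x)=0$, this is full intermittency.

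The main obstacle will be upgrading the pointwise exponential lower bound to the uniform bound $\inf_{x\in\R^d}\underline{m}_2(x)>0$ required by the definition of full intermittency for non-homogeneous initial data. The condition \eqref{E:InitGrow} ($\int e^{-\beta|x|}|\mu|(\ud x)<\infty$ for all $\beta>0$) is tailored to this: it guarantees that for any $\beta>0$ there exists $c_\beta>0$ with $J_0(t,x)\ge c_\beta\, e^{-\beta(|x|+t)}$ (by restricting $\mu$ to a suitable ball and using a lower Gaussian bound for $G$), so the factor $J_0(t,x)^2$ in the moment lower bound is absorbed into $e^{(\gamma^*-2\beta)t}$ uniformly in $x$ with $|x|\le Kt$ for any $K$; a separate argument controlling the large-$|x|$ regime then yields the infimum over $x\in\R^d$. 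Extracting the correct dependence of $\calK$ on the measure $\mu$ (rather than on a bounded density) and making this uniform propagation argument rigorous is the step where the measure-valued setting genuinely differs from the classical one.
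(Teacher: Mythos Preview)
Your treatment of the equivalences \eqref{E:iff1}$\Leftrightarrow$\eqref{E:iff2}$\Leftrightarrow$\eqref{E:iff3} is essentially the paper's argument.

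For the upper half of \eqref{E:Phase}, however, your Laplace-transform step has a real gap: finiteness of $\int_0^\infty e^{-\gamma t}\E[u(t,x)^2]\,\ud t$ for every $\gamma>0$ does \emph{not} by itself force $\overline{m}_2(x)\le 0$ (a nonnegative function can have arbitrarily tall spikes on intervals shrinking fast enough that every Laplace transform converges). The paper never takes this route; it works pointwise in $t$. From the easy induction $h_n(t)\le h_1(t)^n\le h_1(\infty)^n$ (using that each $h_n$ is nondecreasing) one gets $H_\nu(t;\gamma)\le (1-\gamma\, h_1(\infty))^{-1}$ uniformly in $t$ whenever $\gamma\,h_1(\infty)<1$, and then the kernel bound $\calK\le \calL_0\, H_\nu$ together with \eqref{E:InitGrow} gives $\Norm{u(t,x)}_2^2\le C\, J_0(t,x)^2\le C_\beta\, e^{\beta^2\nu t}$ for every $\beta>0$. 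Your approach becomes correct once you observe that $t\mapsto H_\nu(t;\gamma)$ is nondecreasing and hence bounded by its limit, but as written the implication is unjustified.

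The lower half is where your plan is most vulnerable. You assert that a ``renewal/induction argument'' yields $\E[u(t,x)^2]\gtrsim J_0(t,x)^2\,e^{\gamma^* t}$, but for measure-valued $\mu$ the recursion \eqref{E:Recursion} does not factor: $J_1$ is not constant, so the scalar renewal theorem does not apply, and it is unclear how a factor $J_0(t,x)^2$ would emerge from the iterates $\calL_0^{\rhd n}\rhd J_1$. The paper bypasses this entirely. It first uses the identity \eqref{E:SecMom2} to push the $\mu$-integration outside, then bounds the kernel via $\calK(t,\cdot,\cdot;y)\ge \calL_0\cdot T_\nu\cdot H_\nu(t/2,y;\kappa\lip_\rho^2)$ (Lemma~\ref{L:calK}), and finally uses the elementary combinatorial inequality $h_n(t,y)\ge h_1(t/n,y)^n$ (Lemma~\ref{L:h1LowBd2}) to show that $H_\nu$ grows exponentially whenever $\Upsilon(0)=\infty$, or whenever $\lip_\rho$ is large. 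Restricting the $\mu(\ud z)\mu(\ud z')$ integral to a fixed cube $[-a,a]^{2d}$ then gives, for every fixed $x$, a lower bound of the form $C\,G(t,x-\vec{a})^2\,T_\nu(t,2\vec{a})\,e^{ct}$; the first two factors decay only polynomially in $t$, so $\underline{m}_2(x)\ge c$ with $c$ independent of $x$. This also shows that your worry about a ``large-$|x|$ regime'' is misplaced: $\underline{m}_2(x)$ is a $t\to\infty$ limit at \emph{fixed} $x$, so no separate spatial argument is needed, and your claimed bound $J_0(t,x)\ge c_\beta e^{-\beta(|x|+t)}$ (which is false for $\mu=\delta_0$ when $|x|\gg t$) is never required.
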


The condition \eqref{E:iff2} tells us that to have phase transition, the behaviors of $f(x)$ both at the origin and at the infinity matter.
If $f$ is radial $f(x)=\tilde{f}(|x|)$, the integral condition in \eqref{E:iff2} reduces to $\int_0^\infty \tilde{f}(r)r\ud r<\infty$.

Let us first have a look of the cases when $f(0)<\infty$.
Examples of such kernel functions include the {\it Ornstein-Uhlenbeck-type kernels}
$f(x)=\exp\left(-c |x|^\alpha\right)$ for $\alpha\in \:]0,2]$ and $c>0$,
the {\it Poisson kernel} $f(x)=(1+|x|^2)^{-(d+1)/2}$ and the {\it Cauchy kernel} $f(x)=\prod_{j=1}^d (1+x_j^2)^{-1}$.
All these examples satisfy the condition \eqref{E:iff2} for $d\ge 3$.
When $\rho(u)=\lambda u$, the above results are proved using the Feynman-Kac representation of the solution
by Nobel \cite[Theorem 9]{Nobel97}, and
its discrete counterpart ($\bbZ^d$ replaced by $\R^d$) has been well studied by Carmona and Molchanov \cite{CarmonaMolchanov94}.

The typical examples for $f(0)=\infty$ are the {\it Riesz kernels} $f(x)=|x|^{-\alpha}$ with $\alpha\in\:]0,2\wedge d[\:$.
They fail the integrability condition in \eqref{E:iff2} due to their fat tails. Hence, there is no phase transition.
Recently, this case has also been studied by Foondun, Liu and Omaba \cite{FLO14MB}.

% Finally, we remark that if we have the {\it weak comparison principle}, i.e.,
% $P(u(t,x)\ge 0)=1$ for any $t>0$ and $x\in\R^d$ when $\mu\ge 0$,
% then we can turn the result $\underline{m}_2>0$ in Theorem \ref{T:Phase} into
% full intermittency by showing that $m_1\equiv 0$.
% The comparison principle is true with some restrictions either on initial data or on the spatial correlation function $f$,
% or on both; see \cite{ChenKim14Comp,Mueller91,TessitoreZabczyk98}.
% The most general form, i.e., the one under \eqref{E:Dalang} and for $\mu\in\calM_H(\R^d)$, $\mu\ge 0$,
% will be a separate project.

% 
% $u(t,x)$ is fully intermittent for all $\lip_\rho>0$, i.e.,
% $m_1\equiv 0$ and
% Finally, we remark that our result contradicts \cite[Theorem 5.2 (b)]{DalangMueller03},
% which says that $\underline{m}_2>0$ when $d=3$ (\textcolor{red}{We may remove this sentence}).

\bigskip
Another application of our moment formula \eqref{E:MFT} is the study of the intermittency front.
Following \cite{ConusKhosh10Farthest}, define the following {\it growth indices}:
\begin{align}
\label{E:SupGrowInd-0}
\underline{\lambda}(p):= &
\sup\left\{\alpha>0: \underset{t\rightarrow \infty}{\lim\inf}\:
\frac{1}{t}\sup_{|x|\ge \alpha t} \log \E\left(|u(t,x)|^p\right) >0
\right\},\\
\label{E:SupGrowInd-1}
 \overline{\lambda}(p) := &
\inf\left\{\alpha>0: \underset{t\rightarrow \infty}{\lim\sup}\:
\frac{1}{t}\sup_{|x|\ge \alpha t} \log \E\left(|u(t,x)|^p\right) <0
\right\}\;.
\end{align}
These quantities characterize the propagation speed of ``high peaks'';
see \cite{ConusKhosh10Farthest,ChenDalang13Heat} for more details.
The higher spatial dimension cases have more geometry than the one space dimensional case.
Here we will give a rough characterization of the locations of the peaks using the space-time cones.
Refined investigations in this direction will be a separate project.

\begin{theorem}\label{T:Front}
Suppose that $\rho$ satisfies \eqref{E:lip} and the initial data $\mu\ge 0$ satisfies that
\begin{align}\label{E:muBeta}
\int_{\R^d}e^{\beta|x|}\mu(\ud x)<+\infty,\quad\text{for some $\beta>0$.}
\end{align}
Then
% \begin{align}\label{E:Front}
% \overline{\lambda}(2) \le
% \frac{\sqrt{d}}{2}\left(\nu \beta +\frac{\theta}{\beta}\right)<+\infty,
% \end{align}
% Moreover, if $\Upsilon(0)=\infty$, then $\underline{\lambda}(2)>0$ (strict inequality) for all $\nu>0$ and  $\lip_\rho>0$. Otherwise,
% $\underline{\lambda}(2)$ is strictly positive when either $\lip_\rho$ is sufficiently large or $\nu$ is sufficiently small.
% In particular, if $\mu=\delta_0$, then
\begin{align}\label{E:Front}
0\le
\sqrt{\nu \:\theta_*}\:
\le \underline{\lambda}(2)\le
\overline{\lambda}(2) \le
\frac{\sqrt{d}}{2}\left(\nu \beta +\frac{\theta}{\beta}\right)<+\infty,
\end{align}
where the two constants $\theta:=\theta(\nu,\Lip_\rho^2)$ and  $\theta_*:=\theta_*(\nu,\lip_\rho^2)$
are defined as follows
\begin{align}\label{E:Var}
\theta(\nu,\Lip_\rho^2)&:=\inf\left\{\beta>0:  \:\Upsilon\left(2\beta/\nu\right) < \frac{\nu}{2\Lip_\rho^2}\right\},\\
\theta_*(\nu,\lip_\rho^2)&:=\lim_{t\rightarrow\infty} \frac{1}{t}\log\sum_{n=0}^\infty
\left(\left[2\sqrt{3}\right]^{-d}\lip_\rho^2\right)^{n}h_1(t/n)^n.
\end{align}
Moreover, if $\Upsilon(0)=\infty$, then $\theta_*(\nu,\lip_\rho^2)>0$ (strict inequality) for all $\nu>0$ and  $\lip_\rho>0$. Otherwise,
$\theta_*(\nu,\lip_\rho^2)$ is strictly positive when either $\lip_\rho$ is sufficiently large or $\nu$ is sufficiently small.
\end{theorem}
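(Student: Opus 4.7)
The plan is to combine the second-moment bounds coming from the moment formula \eqref{E:MFT} with Gaussian convolution estimates against the exponentially integrable initial measure in \eqref{E:muBeta}. The upper and lower fronts are treated separately, and the last claim about positivity of $\theta_*$ is a direct analysis of the defining series.

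For the upper bound $\overline{\lambda}(2)\le \tfrac{\sqrt{d}}{2}(\nu\beta+\theta/\beta)$, I would first extract from the Picard-iteration upper bound on $\E[u(t,x)^2]$ an estimate of the form $\E[u(t,x)^2]\le C\,J_0(t,x)^2\,e^{\theta' t}$ for any $\theta'>\theta$, where the exponential rate $\theta$ is precisely the infimum at which the Laplace-transform-in-time of the Picard series converges, as dictated by the condition $\Upsilon(2\theta/\nu)<\nu/(2\Lip_\rho^2)$ defining $\theta$. Next I would bound $J_0(t,x)$ by convolving the $d$-dimensional Gaussian against $e^{\beta|y|}|\mu|(\ud y)$: via $|y|\le\sum_i|y_i|$, performing each one-dimensional Gaussian convolution, and then using $\sum_i|x_i|\ge|x|/\sqrt{d}$, one obtains
$$
J_0(t,x)\le C\,\exp\!\Bigl(\tfrac{1}{2}\nu\beta^2 t -\tfrac{\beta}{\sqrt d}\,|x|\Bigr).
$$
Setting $|x|=\alpha t$ and combining yields $t^{-1}\log\E[u(t,x)^2]\le \theta'+\nu\beta^2-2\alpha\beta/\sqrt d +o(1)$, which becomes negative when $\alpha>\tfrac{\sqrt d}{2}(\nu\beta+\theta/\beta)$, giving the upper bound on $\overline{\lambda}(2)$.

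For the lower bound $\underline{\lambda}(2)\ge\sqrt{\nu\theta_*}$, I would invoke the lower bound on $\E[u(t,x)^2]$ under \eqref{E:lip}, which from the moment formula is a Picard series of iterated convolutions of $G^2$ against $f$; the Brownian representation together with a standard ``equalization'' across the $n$ time-increments gives the $n$th term bounded below by $(c\lip_\rho^2)^n h_1(t/n)^n$, so $\sum_n(\cdots)\ge e^{(\theta_*-\varepsilon)t}$. Choosing $x_0\in\spt{\mu}$ and considering $x$ on the ray through $x_0$ with $|x|=\alpha t$ for $\alpha<\sqrt{\nu\theta_*}$, the Gaussian lower bound gives $J_0(t,x)\ge c\,t^{-d/2}e^{-\alpha^2 t/(2\nu)}$ (taking $\mu$ mass inside a unit ball around $x_0$), so $t^{-1}\log\E[u(t,x)^2]\ge \theta_*-\alpha^2/\nu+o(1)>0$, as required.

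Finally, for the positivity of $\theta_*$: set $S(t)=\sum_n(\bigl[2\sqrt{3}\bigr]^{-d}\lip_\rho^2)^n h_1(t/n)^n$ and, for fixed $T>0$, pick $n=\lfloor t/T\rfloor$ to obtain $S(t)\ge (c\lip_\rho^2 h_1(T))^{t/T}$, whence $\theta_*\ge T^{-1}\log(c\lip_\rho^2 h_1(T))$ whenever the argument exceeds one. If $\Upsilon(0)=\infty$ then by Theorem~\ref{T:Phase} $h_1(T)\uparrow\infty$ and so $c\lip_\rho^2 h_1(T)>1$ for $T$ large, regardless of $\nu,\lip_\rho>0$. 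If $\Upsilon(0)<\infty$ then $h_1(\infty)<\infty$, but $c\lip_\rho^2 h_1(\infty)>1$ still holds when $\lip_\rho$ is large; for small $\nu$, the dependence of $h_1$ on $\nu$ through $B_t$ with $\E|B_t|^2=\nu t$ concentrates $B_t$ near the origin as $\nu\downarrow 0$, so $k(s)$ and hence $h_1(T)$ increase, eventually making the argument exceed one.

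The main obstacle is the sharp matching of rates in steps one and two: tracing the constant $\theta$ through the Laplace-transform analysis of the Picard series so that it agrees exactly with the variational definition in \eqref{E:Var}, and verifying that the Brownian representation of the iterated convolutions in the lower bound reduces cleanly to $h_1(t/n)^n$ up to the dimension-dependent prefactor $[2\sqrt3]^{-d}$ appearing in the definition of $\theta_*$.
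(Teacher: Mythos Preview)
Your proposal is correct and follows essentially the same route as the paper: the upper bound comes from $\E[u(t,x)^2]\le C\,J_0^2(t,x)e^{\theta t}$ (Lemma~\ref{L:EstHt} and \eqref{E:CalK-Up}) combined with the Gaussian--exponential estimate on $J_0$ (packaged as Lemma~\ref{L:InitBeta}); the lower bound localizes $\mu$ to a small box in its support and combines the kernel lower bound $\underline{\calK}\gtrsim \calL_0\cdot H_\nu$ from Lemma~\ref{L:calK} with the ``equalization'' inequality $h_n(t)\ge h_1(t/n)^n$ from Lemma~\ref{L:h1LowBd2}(1); and the positivity of $\theta_*$ is exactly your single-term argument with $n\approx t/T$, which is Lemma~\ref{L:h1LowBd2}(2)--(3). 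The only cosmetic difference is that the paper obtains the constant $[2\sqrt{3}]^{-d}$ and the reduction to $h_1(t/n)^n$ by direct Fourier/Gaussian manipulations on the iterated kernels $\calL_n$ rather than through a Brownian representation, and it keeps track of the auxiliary $y$-variable in $H_\nu(t,y;\gamma)$ because the lower bound on $\calK$ depends on $z'-z$ --- this is precisely why the localization to a small box is needed before one can ``factor out'' the series from $J_0^2$, a step your sketch treats implicitly.
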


\begin{remark}
When $d=1$, $f=\delta_0$, $\rho(u)=\lambda u$, and the initial measure $\mu\ge 0$ satisfies \eqref{E:muBeta},
it is proved in \cite{ChenDalang13Heat} that
\[
\frac{\lambda^2}{2}\le \underline{\lambda}(2)\le \overline{\lambda}(2)\le \frac{\beta \nu}{2}+ \frac{\lambda^4}{8\nu\beta},
\]
in particular, when $\beta\ge \lambda^2/(2\nu)$, $ \underline{\lambda}(2)= \overline{\lambda}(2)=\lambda^2/2$.
On the other hand, as shown in Example \ref{Eg:WhiteNoise},
$\theta=\nu^{-1}\lambda^4$ and $\theta_*=(6 \pi \nu e^2)^{-1}\lambda^4$.
Hence, by \eqref{E:Front}, when $\beta \ge \lambda^2/\nu$,
\[
0.0847335\: \lambda^2 \approx \frac{\lambda^2}{e\sqrt{6 \pi}} \le \underline{ \lambda}(2)\le \overline{\lambda}(2)\le \lambda^2.
\]
These estimates in \eqref{E:Front} are not as sharp as those in \cite{ChenDalang13Heat}
but they cover more general noises.
\end{remark}

% \vspace{1em}
% \paragraph{Some notation}
% \bigskip
\bigskip
Throughout this paper, $\Norm{\cdot}_p$ denotes the $L^p(\Omega)$ norm.

\bigskip
This paper is organized as follows.
We first study the existence and uniqueness of a random field solution to \eqref{E:SHE}
under rough initial conditions in Section \ref{S:ExUni}.
The phase transition result (Theorem \ref{T:Phase}) is proved in Section \ref{S:Phase}.
The growth indices result (Theorem \ref{T:Front}) is proved in Section \ref{S:Front}.
Finally, some examples are listed in the Appendix.

\section{Existence and uniqueness}\label{S:ExUni}

\subsection{Some prerequisites}\label{S:Pre}

Throughout this subsection let $R(x,y)$ be a non-negative and non-negative
definite kernel in the sense that
\[
\iint_{\R^{2d}} R(x,y)\psi(x)\psi(y)\ud x\ud y\ge 0,\quad \text{for all
$\psi\in C_c^\infty\left(\R^d\right)$}\;,
\]
where
$C_c^\infty\left(\R^d\right)$ be the test functions, i.e.,
functions in $C^\infty\left(\R^d\right)$ with compact support.
Suppose that $R(x,y)$ satisfies the following condition:
\[
\iint_{K\times K} R(x,y)\ud x \ud y <+\infty\;,\qquad \text{for all compact
sets $K\in\R^d$.}
\]
Associated with such $R$, there is a non-negative and locally finite measure,
denoted by $\mu_R$,
over $\R^d$ such that
\[
\mu_R(K) := \iint_{K\times K} R(x,y)\ud x \ud y \;,\quad \text{for all Borel
sets
$K\subseteq \R^d$}\:.
\]

\begin{definition}
A {\em spatially $R$-correlated Gaussian noise that is white in time} is an
$L^2(\Omega)$-valued mean zero Gaussian process
\[
\left\{ F(\psi): \:\psi\in C_c^\infty\left(R^{1+d}\right)\:\right\}\;,
\]
such that
\[
\E\left[F(\psi)F(\phi)\right] =
\int_0^t \ud s \iint_{\R^d} \psi(s,x) R(x,y)\phi\left(s,y\right)\ud x\ud y\:.
\]
\end{definition}
Note that if $R(x,y)=h(x-y)$ for some kernel $h$, then the above definition
reduces to the spatially homogeneous Gaussian noise that is white in time
\cite{Dalang99}.
In particular, if $h(x-y)= \delta_0(x-y)$, then this noise becomes the
space-time white noise and the associated measure $\mu_R$ reduces to the Lebesgue measure on $\R^d$.

% \subsubsection{Extension of $F(\psi)$ to a Worthy Martingale Measure}
%
% From the spatially $R$-correlated Gaussian noise, we now construct a worthy
% martingale measure $M=\left(M_t(A):\: t\ge 0,\: A\in\calB_b(R^d)\right)$, where
% $\calB_b\left(R^d\right)$ is the family of bounded Borel subsets of $\R^d$.
% For each $A\in\calB_b(\R^d)$, we set
% \[
% M_t(A):= \lim_{n\rightarrow\infty} F\left(\phi_n\right)\;,
% \]
% where the limit is in $L^2(\Omega)$, $\psi_n\in
% C_0^\infty\left(\R^{1+d}\right)$ such that $\psi_n\rightarrow 1_{[0,t]\times
% A}$ in $L^2\left(\R^{1+d}\right)$. The key relationship between $F$ and $M$ is
% that
% \[
% F(\psi) = \iint_{R_+\times\R^d} \psi(t,x) M(\ud t,\ud x)\;,
% \]
% where the stochastic integral on the right-hand side is Walsh's martingale
% measure stochastic integral.
%
%
%
% % In the following, we shall work under this filtered
% % probability space $\left\{\Omega,\calF,\{\calF_t:t\ge0\},P\right\}$.
%
% Clearly, this martingale measure is adapted to this
% filtration $\left\{\calF_t:t\ge 0\right\}$. The
% covariance functional of $M$ is
% \[
% \InPrd{M_\cdot (A),M_\cdot (B)}_t = \iiint_{[0,t]\times A\times B}
% R(x,y)\ud s\ud x\ud y,\quad \text{for all $A, B\in\calB_b(\R^d)$}\;.
% \]

% \subsection{Some Criteria for Predictable Random Fields}\label{SS2:ProbSpace}
We need some criteria to check whether a random field is predictable.
As in \cite{Dalang99}, we extend $F$ to a $\sigma$-finite $L^2$-valued measure $B \rightarrow F(B)$
defined for bounded Borel sets $B\in  \R_+ \times \R^d$ and then define
\[
M_t(A):=F([0,t]\times A), \quad A\in\calB_b\left(\R^d\right).
\]
Let $(\calF_t,t\ge 0)$ be the filtration given by
\[
\calF_t := \sigma\left(F_s(A):\: 0\le s\le t,
A\in\calB_b\left(\R^d\right)\right)\vee
\calN,\quad t\ge 0,
\]
which is the natural filtration augmented by all $P$-null sets $\calN$ in $\calF$,
where $\calB_b(\R^d)$ is the collection of Borel measurable sets with finite Lebesgue measure.
The family of subsets of $\R_+\times\R^d\times\Omega$, which contains all sets
of the
form $\{0\}\times F_0$ and $]s,t]\times A\times F$, where $F_0\in \calF_0$,
$F\in\calF_s$ for $0\le s<t$ and $A$ is a rectangle in $\R^d$, is called the
class of {\it predictable rectangles}. The $\sigma$-field generated by
the predictable rectangles is called the {\it predictable $\sigma$-field},
which is denoted by $\calP$. Sets in $\calP$ are called {\it predictable sets}.
A random field $X:\Omega\times\R_+\times\R^d\mapsto \R$ is called {\it
predictable} if $X$ is $\calP$-measurable.

For $p\in [2,\infty[\:$, denote $\calP_{p}$ to be the set of all predictable and
% \textcolor{blue}{}
$L^2_R\left(\R_+\times\R^d; \: L^p\left(\Omega\right)\right)$
integrable random fields.
% \textcolor{red}{(I think here $L^2$ integrable is a bit confusing since it is not exactly $L^2$ in x but weighted in $L^2$ by $f$)}.
% \textcolor{blue}{(You are right. I have made some changes, see above.)}
More precisely, $f\in\calP_p$ if and only if $f$ is
predictable and
\begin{align}
 \Norm{f}_{M, p}^2 & :
= \iiint_{\R_+^*\times\R^{2d}}R(x,y) \Norm{f(s,x)f\left(s,y\right)}_{\frac{p}{2}}\ud s\ud
x\ud y<+\infty\;,
\end{align}
where $\Norm{\cdot}_p$ denotes the $L^p(\Omega)$-norm.
In particular, if $R(x,y)=\delta_0(x-y)$, then
\[
 \Norm{f}_{M, p}^2
= \iint_{\R_+^*\times\R^{d}} \Norm{f(s,x)}_p^2\ud s\ud x.
\]
Clearly,
\[
\calP_2\supseteq \calP_p \supseteq \calP_q,\quad \text{for $2\le p\le
q<+\infty$.}
\]
% In the following, by default $\Norm{f}_{R}$ denotes $\Norm{f}_{R,2}$.

The following proposition is useful to check whether a random field belongs
to $\calP_{p}$ or not.

\begin{proposition}\label{P:Pm}
Suppose that for some $t>0$ and $p \in [2,\infty[$, a random field
\[
X=\left\{\: X\left(s,y\right):\: \left(s,y\right)\in \;]0,t[\:\times \R^d\: \right\}
\]
has the following properties:
\begin{itemize}
 \item[(i)] $X$ is adapted, i.e., for all $\left(s,y\right)\in\;]0,t[\:\times \R^d$, $X\left(s,y\right)$
is
$\calF_s$-measurable;
 \item[(ii)] $X$ is jointly measurable with respect to
$\calB\left(\R_+\times \R^d\right)\times\calF$;
 \item[(iii)] $\Norm{X}_{M, p}<+\infty$.
\end{itemize}
Then $X(\cdot,\circ)\: 1_{]0,t[}(\cdot)$ belongs to $\calP_p$.
\end{proposition}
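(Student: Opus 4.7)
The plan is to realize $X \cdot 1_{]0,t[}$ as a $\Norm{\cdot}_{M,p}$-limit of predictable random fields. Concretely, I would first truncate $X$ to a bounded field with compact space--time support, and then mollify in the time variable to produce left-continuous adapted approximants. Membership of the limit in $\calP_p$ will then follow by extracting an a.s.\ convergent subsequence (with respect to $R(x,y)\,dx\,dy\,ds\otimes P$) and modifying on a null set.

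For the outer truncation, set $X^{(N,K,T)}(s,y) := X(s,y)\,1_{\{|X(s,y)|\le N,\ |y|\le K,\ s\le T\}}$. Since
\[
\Norm{X^{(N,K,T)}(s,x)X^{(N,K,T)}(s,y)}_{p/2}\le \Norm{X(s,x)X(s,y)}_{p/2},
\]
and the pointwise limit of the left-hand side recovers $\Norm{X(s,x)X(s,y) 1_{]0,t[}(s)}_{p/2}$, scalar dominated convergence together with hypothesis (iii) yields $X^{(N,K,T)} \to X\cdot 1_{]0,t[}$ in $\Norm{\cdot}_{M,p}$ as $N,K,T\to\infty$.

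For each truncated field, extend by zero off $]0,t[\times\R^d$ and set
\[
X^{(N,K,T)}_n(s,y) := n\int_{(s-1/n)^+}^{s} X^{(N,K,T)}(r,y)\,dr,\qquad n\ge 1.
\]
Fubini's theorem and (ii) give joint measurability; (i) together with $[(s-1/n)^+,s]\subseteq[0,s]$ gives $\calF_s$-measurability of $X^{(N,K,T)}_n(s,y)$; and $s\mapsto X^{(N,K,T)}_n(s,y)$ is continuous by construction. Thus $X^{(N,K,T)}_n$ is left-continuous and adapted, hence predictable under the standard identification of $\calP$ with the $\sigma$-field generated by left-continuous adapted processes. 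Boundedness and compact support, together with the local finiteness $\iint_{K\times K}R(x,y)\,dx\,dy<\infty$ assumed in the preliminaries, give $X^{(N,K,T)}_n\in\calP_p$. The scalar Lebesgue differentiation theorem applied $\omega$-by-$\omega$ combined with Fubini delivers pointwise-a.e.\ convergence $X^{(N,K,T)}_n(s,y)(\omega)\to X^{(N,K,T)}(s,y)(\omega)$, and the uniform bound $4N^2$ on the products $X^{(N,K,T)}_n(s,x)X^{(N,K,T)}_n(s,y)$ over the compact support allows dominated convergence to conclude that $\Norm{X^{(N,K,T)}_n - X^{(N,K,T)}}_{M,p}\to 0$ as $n\to\infty$. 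A diagonal argument and one further subsequence then complete the proof.

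I expect the main obstacle to be the domination step for the quadratic integrand $R(x,y)\Norm{X_n(s,x)X_n(s,y)}_{p/2}$ without the outer truncation: a direct Cauchy--Schwarz bound $\Norm{UV}_{p/2}\le\Norm{U}_p\Norm{V}_p$ leads, via the Hardy--Littlewood maximal function of $\Norm{X(\cdot,y)}_p$ in time, to a rank-one integrand against $R$ that need not be integrable, because hypothesis (iii) controls a ``diagonal'' quadratic form in $X$ rather than a bilinear form in $\Norm{X(\cdot,y)}_p$. The two-layer approximation above sidesteps this difficulty by first reducing to the bounded, compactly supported regime where domination against $R(x,y)\one_{K\times K}(x,y)\one_{[0,T+1]}(s)$ is trivial.
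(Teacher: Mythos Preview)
Your approach is sound, and in fact the paper does not give its own proof of this proposition: it simply remarks that the argument is the same as that of \cite[Proposition~3.1]{ChenDalang13Heat} (the $d=1$, space--time white noise case), adapted to the $R$-weighted norm. Your two-layer scheme---first truncate to a bounded, compactly supported field, then mollify in time to produce a left-continuous adapted (hence $\calP$-measurable) approximant, and pass to the limit by dominated convergence---is exactly the kind of argument that underlies that reference, and you have correctly identified why the outer truncation is needed before mollifying: without it the maximal-function bound on the mollified field does not interact well with the bilinear structure of $\Norm{\cdot}_{M,p}$.

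Two small points worth tightening. First, the indicator $1_{\{s\le T\}}$ in your truncation is harmless but redundant once $|X|\le N$ and you have extended by zero off $]0,t[$; the relevant compactness is in the space variable (so that $\iint_{\{|x|,|y|\le K\}}R(x,y)\,dx\,dy<\infty$ applies) and boundedness in $\omega$. Second, when you pass from a.e.\ convergence of $X^{(N,K,T)}_n(s,y)(\omega)$ to convergence of the quadratic integrand $R(x,y)\Norm{(X_n-X)(s,x)(X_n-X)(s,y)}_{p/2}$, you are implicitly using Fubini twice: once to upgrade ``for each $(y,\omega)$, a.e.\ $s$'' to ``a.e.\ $(s,y,\omega)$'', and once more to get ``for a.e.\ $(s,\omega)$, a.e.\ $(x,y)$''. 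This is routine but should be stated, and it also relies on $R(x,y)\,dx\,dy$ being absolutely continuous with respect to Lebesgue measure, which is guaranteed by the standing assumption that $R$ is a locally integrable kernel. With those clarifications your argument is complete.
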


This proposition is an extension of Dalang \& Frangos's result in \cite[Proposition 2]{DalangFrangos98}
in the two senses: (1) the second moment of $X$ can blow up at $s=0$ or $s=t$, which is the case, e.g., when the initial data is the Dirac delta measure;
(2) the condition that $X$ is $L^2(\Omega)$-continuous has been removed.
The proof of this proposition follows essentially the same arguments as
the proof of the the case where $d=1$ and the noise is white in both space and time variables;
see \cite[Proposition 3.1]{ChenDalang13Heat}.

\subsection{Statement of the result}
We formally write the SPDE \eqref{E:SHE} in the integral form
\begin{align}\label{E:WalshSI}
u(t,x)= J_0(t,x)+ I(t,x)
\end{align}
where
\[
I(t,x):= \iint_{[0,t]\times \R^d} G(t-s,x-y) \rho(u(s,y))M(\ud s,\ud y).
\]
The above stochastic integral is understood in the sense of Walsh \cite{Dalang99,Walsh}.

\begin{definition}\label{D:Solution}
A process $u=\left(u(t,x),\:(t,x)\in\R_+^*\times\R^d \right)$  is called a {\it
random field solution} to \eqref{E:SHE} if
\begin{enumerate}[(1)]
 \item $u$ is adapted, i.e., for all $(t,x)\in\R_+^*\times\R^d$, $u(t,x)$ is
$\calF_t$-measurable;
\item $u$ is jointly measurable with respect to
$\calB\left(\R_+^*\times\R^d\right)\times\calF$;
\item $\Norm{I(t,x)}_2<+\infty$ for all $(t,x)\in\R_+^*\times\R^d$;
% \textcolor{red}{(I think we have not defined $\|\cdot\|_2$ yet)}
% \textcolor{blue}{(Right, we will arrange the presentation.)}
\item  The function $(t,x)\mapsto I(t,x)$ mapping $\R_+^*\times\R^d$ into
$L^2(\Omega)$ is continuous;
% \textcolor{red}{(Q) Is this pointwise continuous?)}
% \textcolor{blue}{(This is a minimal requirement which guarantees the predictability of the integrand. The sample-path continuity will be a consequence.)}
\item $u$ satisfies \eqref{E:WalshSI} a.s.,
for all $(t,x)\in\R_+^*\times\R^d$.
\end{enumerate}
\end{definition}

Denote
\[
J_1(t,x,x'):=J_0(t,x)J_0(t,x')
\]
and $g(t,x,x'):= \E\left[u(t,x)u(t,x')\right]$.
Then by \Itos isometry, $g$ satisfies the following integral equation (for $\rho(u)=\lambda u$)
\begin{align}\label{E:Recursion}
\begin{aligned}
g(t,x,x')= J_1(t,x,x')+\lambda^2 &\int_0^t\ud s \iint_{\R^{2d}}\ud z\ud z'
\; g(s,z,z')\\
\times &G(t-s,x-z)G(t-s,x'-z') f(z-z').
\end{aligned}
\end{align}
Replacing the function $g$ on the r.h.s. of \eqref{E:Recursion} by \eqref{E:Recursion} itself repeatedly
suggests the following definitions.
For $h,w:\R_+\times\R^{3d}\mapsto\R$, define the operation ``$\rhd$'', which depends on $f$, as follows
\begin{equation}
\label{E:RHD-BACK}
\begin{aligned}
\left(h \rhd w\right)(t,x,x';y):=\int_0^t \ud s \iint_{\R^{2d}}\ud z \ud z'\; h(t-s,x-z,x'-z';y-(z-z'))&  \\
\qquad \times  w(s,z,z';y)\: f(y-(z-z'))& .
\end{aligned}
\end{equation}
By change of variables,
\begin{equation}
\label{E:RHD-FOR}
\begin{aligned}
\left(h \rhd w\right)(t,x,x';y):=\int_0^t \ud s \iint_{\R^{2d}}\ud z \ud z'\: h(s,z,z';y-[(x-z)-(x'-z')])&\\
\times  w(t-s,x-z,x'-z';y)\: f(y-[(x-z)-(x'-z')])&.
\end{aligned}
\end{equation}
Note that for general $f$, this convolution-type operator is not symmetric,
$h\rhd w \ne w\rhd h$, except for some special cases, such as,
$f\equiv 1$ or $f=\delta_0$. Operators of this type have been studied in Chen's thesis \cite[Chapter 3]{LeChen13Thesis} \footnote{The operator in \cite[Chapter 3]{LeChen13Thesis} is more general.
Indeed, by taking the spatial dimension to be $2d$, and $\theta^2(t,x)=f(\hat{x}-\hat{x}')$ where
$x=(\hat{x},\hat{x}')$ with $\hat{x}, \hat{x}'\in\R^d$, one reduces the operator in \cite[Chapter 3]{LeChen13Thesis} to the current operator.}.
Some calculations show that by introducing the additional variable $y$, this operator becomes associative, i.e.,
for $h,w,v : \R_+\times\R^{3d}\mapsto \R$,
\[
\left(\left(h\rhd w\right)\rhd v\right)(t,x,x';y)
=\left(h\rhd \left(w\rhd v\right)\right)(t,x,x';y);
\]
See Lemma \ref{L:Associative} below.
We will use the following convention: If $h$ is a function from $\R_+\times\R^{2d}$ to $\R$,
when applying the operation $\rhd$ to $h$, it is meant for $\hat{h}(t,x,x';y):=h(t,x,x')$.

\bigskip
For $t>0$ and  $x,x',y\in\R^d$, define recursively:
\[
\calL_{0}(t,x,x';y):= G(t,x)G(t,x')
\]
and for $n\ge 1$,
\[
\calL_{n}(t,x,x';y):= \left(\calL_{0}\rhd \calL_{n-1}\right)(t,x,x';y).
\]
For $\lambda\in\R$, define formally
\begin{align*}
\calK_{\lambda}(t,x,x';y):= \sum_{n=0}^\infty \lambda^{2(n+1)}\calL_{n}(t,x,x';y).
\end{align*}
The convergence of the above series is proved in Lemma \ref{L:calK} below.
We will use the following convention for $\calK_{\lambda}$:
\begin{align}
\label{E:Conv}
\calK:= \calK_{\lambda} \qquad \overline{\calK}:=\calK_{\Lip_\rho}\qquad
\underline{\calK} := \calK_{\lip_\rho}.
\end{align}
Using these notation and conventions, we see that \eqref{E:Recursion} can be written in the following way:
\begin{align}\label{E:Rec}
g(t,x,x') &= J_1(t,x,x')+ \lambda^2 \left(\calL_{0}\rhd g \right)(t,x,x';0),
\end{align}
which suggests that
\[
g(t,x,x')   =
J_1(t,x,x') + \left(\calK\rhd J_1\right)(t,x,x';0).
\]

\bigskip
\begin{theorem}\label{T:ExUni}
For any $\mu\in\calM_H(\R^d)$, the SPDE \eqref{E:SHE} has a unique (in the sense of versions) random field solution
$\left\{u(t,x): t>0,x \in\R^d\right\}$ starting from $\mu$.
This solution is $L^2(\Omega)$-continuous.
Moreover, the following moment estimates are true:\\
(1)If $\rho(u)=\lambda u$, then the two-point correlation function is equal to
\begin{align}\label{E:SecMom}
 \E\left[u(t,x)u(t,x')\right]
&= J_1(t,x,x') + \left( \calK \rhd J_1\right) (t,x,x';0)\\
&= \lambda^{-2}\iint_{\R^{2d}} \mu(\ud z)\mu(\ud z')\: \calK(t,x-z,x'-z';z'-z).
\label{E:SecMom2}
\end{align}
(2) If $|\rho(x)| \le \Lip_\rho |x|$ for all $x\in\R$ with $\Lip_\rho>0$ and if $\mu\ge 0$, then
\begin{align}\label{E:SecMom-Up}
\E\left[u(t,x)u(t,x')\right] \le
 \Lip_\rho^{-2}\iint_{\R^{2d}} \mu(\ud z)\mu(\ud z')\: \overline{\calK}(t,x-z,x'-z';z'-z).
\end{align}
% \textcolor{red}{(I am not quite sure how we can get \eqref{E:SecMom-Up} since the absolute value is inside the expectation. Even without the absolute value, it is not clear to me whether we can get an infinite series for the two-point correlation function without assuming the non-negativity of the solution. Could you check this?)}  
\noindent
(3) If $\rho$ satisfies \eqref{E:lip}, then
\begin{align}\label{E:SecMom-Lower}
\E\left[\left|u(t,x)u(t,x')\right|\right] \ge
 \lip_\rho^{-2}\iint_{\R^{2d}} \mu(\ud z)\mu(\ud z')\: \underline{\calK}(t,x-z,x'-z';z'-z).
\end{align}
\end{theorem}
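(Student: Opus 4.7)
The plan is to establish the linear case $\rho(u)=\lambda u$ first via Picard iteration and then reduce the nonlinear bounds to it. Set $u_0(t,x):=J_0(t,x)$ and
\[
u_{n+1}(t,x):=J_0(t,x)+\int_0^t\!\!\int_{\R^d} G(t-s,x-y)\,\lambda\,u_n(s,y)\,M(\ud s,\ud y).
\]
Adaptedness and joint measurability propagate inductively; the key input is Proposition \ref{P:Pm}, which tolerates the blow-up of $J_0(t,x)$ as $t\downarrow 0$ caused by measure-valued initial data and replaces the usual $L^2(\Omega)$-continuity hypothesis. That $J_0(t,x)$ is finite for all $t>0,x\in\R^d$ is precisely what the class $\calM_H(\R^d)$ guarantees.

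The computational heart is the recursion for $F_n(t,x,x'):=\E[u_n(t,x)u_n(t,x')]$. Walsh's isometry yields
\[
F_{n+1}(t,x,x')=J_1(t,x,x')+\lambda^2\,(\calL_0\rhd F_n)(t,x,x';0),
\]
where $F_n$ is extended to depend trivially on the extra variable $y$ per the convention after \eqref{E:RHD-FOR}. Iterating and invoking the convergence $\calK_\lambda=\sum_{n\ge 0}\lambda^{2(n+1)}\calL_n$ from Lemma \ref{L:calK} gives \eqref{E:SecMom}. To pass to \eqref{E:SecMom2}, I would write $J_1(s,z,z')=\iint G(s,z-w)G(s,z'-w')\mu(\ud w)\mu(\ud w')$, apply Fubini, perform the translation $(z,z')\mapsto(z+w,z'+w')$, and use the symmetry of $f$ to identify
\[
(\calL_n\rhd J_1)(t,x,x';0)=\iint_{\R^{2d}}\mu(\ud w)\mu(\ud w')\,\calL_{n+1}(t,x-w,x'-w';w'-w),
\]
proved by induction from associativity of $\rhd$ (Lemma \ref{L:Associative}). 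Summing over $n$ and absorbing the $J_1$ term (which corresponds to $\calL_0$) produces the $\lambda^{-2}\calK$ representation. Convergence of $\{u_n\}$ in $L^2(\Omega)$ and uniqueness come from the same recursion applied to differences: $d_n:=\E[(u_{n+1}-u_n)(t,x)(u_{n+1}-u_n)(t,x')]$ satisfies $d_n=\lambda^{2n}(\calL_{n-1}\rhd d_0)(\cdot,\cdot,\cdot;0)$, which decays by the $\calK$-type bounds; $L^2(\Omega)$-continuity of $I(t,x)$ then follows by dominated convergence using the explicit series for its second moment.

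The nonlinear case follows by the same Picard scheme with $\lambda$ replaced by either $\Lip_\rho$ or $\lip_\rho$, but the recursion becomes an inequality. From $|\rho(u)|\le\Lip_\rho|u|$ and Cauchy-Schwarz,
\[
F_{n+1}(t,x,x')\le J_1(t,x,x')+\Lip_\rho^2\,(\calL_0\rhd F_n)(t,x,x';0),
\]
so iteration with $\overline{\calK}$ delivers \eqref{E:SecMom-Up}. For \eqref{E:SecMom-Lower}, the weak comparison principle \eqref{E:WeakCom} forces $u(t,x)\ge 0$ a.s.\ under $\mu\ge 0$, so $\E\bigl[|u(t,x)u(t,x')|\bigr]=\E[u(t,x)u(t,x')]$ and the hypothesis \eqref{E:lip} gives the pointwise lower bound $\rho(u)\rho(v)\ge\lip_\rho^2\,uv$ on the positive quadrant. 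Iterating the reversed inequality with $\underline{\calK}$ yields \eqref{E:SecMom-Lower}. Existence and uniqueness in the nonlinear case use the same $\calP_2$ framework, now with $\overline{\calK}$ controlling the Picard norms.

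The principal obstacle I anticipate is the measure-valued initial condition: the Picard integrands inherit the $t\downarrow 0$ singularity of $J_0$, so the classical Dalang-Walsh stochastic integration theory does not directly apply, and one must rely on the extended Proposition \ref{P:Pm}, combined with careful use of Dalang's condition \eqref{E:Dalang} to absorb the $\iint f(z-z')G(t,x-z)G(t,x'-z')\,\ud z\,\ud z'$ singularities in the spatial correlation. A secondary but delicate point is the Fubini/translation argument converting \eqref{E:SecMom} into \eqref{E:SecMom2}: because $\rhd$ is not symmetric and depends on an auxiliary variable $y$, interchanging it with the $\mu\otimes\mu$ integration must be justified via the associativity in Lemma \ref{L:Associative} rather than by brute force.
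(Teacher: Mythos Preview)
Your overall strategy matches the paper's: Picard iteration governed by the kernel bounds of Lemma~\ref{L:calK}, predictability via Proposition~\ref{P:Pm}, the passage from \eqref{E:SecMom} to \eqref{E:SecMom2} by writing $J_1$ as a double integral against $\mu\otimes\mu$ and changing variables (this is exactly the content of Lemma~\ref{L:InDt}), and closing the nonlinear bounds by turning the recursion \eqref{E:Recursion} into an inequality. So far, so good.

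There is, however, a genuine gap in your argument for part~(2). Cauchy--Schwarz does \emph{not} deliver the inequality
\[
F_{n+1}(t,x,x')\le J_1(t,x,x')+\Lip_\rho^2\,(\calL_0\rhd F_n)(t,x,x';0).
\]
From $|\rho(x)|\le \Lip_\rho|x|$ you get
$\E[\rho(u_n(s,z))\rho(u_n(s,z'))]\le \Lip_\rho^2\,\E[|u_n(s,z)||u_n(s,z')|]$,
and Cauchy--Schwarz then bounds this by $\Lip_\rho^2\,\Norm{u_n(s,z)}_2\Norm{u_n(s,z')}_2$. Neither quantity equals $F_n(s,z,z')=\E[u_n(s,z)u_n(s,z')]$ unless the iterates are nonnegative, which they are not in general (the stochastic integral has no sign). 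Substituting the Cauchy--Schwarz bound yields a recursion in the product $\Norm{u_n(\cdot,z)}_2\Norm{u_n(\cdot,z')}_2$, which does \emph{not} close to the two-point form needed for $\overline{\calK}$. The paper avoids this by invoking the weak comparison principle \eqref{E:WeakCom} (assumed throughout) on the \emph{solution} $u$ itself rather than on the Picard iterates: since $\mu\ge 0$ implies $u\ge 0$ a.s., one has $\E[|u(s,z)||u(s,z')|]=\E[u(s,z)u(s,z')]=g(s,z,z')$, and the recursion \eqref{E:Recursion} for $g$ closes with ``$\le$'' and $\Lip_\rho$. You already use \eqref{E:WeakCom} correctly in part~(3); it is needed in part~(2) for exactly the same reason.
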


Note that the condition in part (2) is true if the weak comparison principle holds and $\mu\ge 0$.
In a recent paper \cite{CHN15TwoPoint}, an explicit expression for this kernel function $\calK$ is
obtained when $d=1$ and $f(x)=\delta_0(x)$.

%
% \begin{theorem}[Full intermittency]
% Let $\mu_L$ be the Lebesgue measure on $\R^d$.
% Suppose that $\mu\in\calM_H(\R^d)$. If for some $c>0$, $\mu\ge c \mu_L$, then the solution $u(t,x)$ to \eqref{E:SHE} starting from $\mu$ is fully intermittent, i.e.,
% \[
% m_1(x)\equiv 0,\quad\text{and}\quad \inf_{x\in\R^d} \underline{m}_2(x)>0.
% \]
% \end{theorem}

% % \newpage
% \section{Proof of Theorem \ref{T:ExUni}}

\subsection{Proof of Theorem \ref{T:ExUni}}
% \subsection{Kernel Functions}
We will first prove some results.
Recall the definition of the function $k(t)$ in \eqref{E:h1}.
By the Fourier transform, this function $k(t)$ can also be rewritten in the following form
\begin{align}\label{E:k-F}
k(t)=(2\pi)^{-d} \int_{\R^d}  \hat{f}(\ud \xi)\exp\left(-\frac{\nu t}{2}|\xi|^2\right),
\end{align}
from which one can see that $t\mapsto k(t)$ is a nonincreasing function.
For $t\ge 0$ and $y\in\R^d$, define $h_0(t,y):=1$ and for $n\ge 1$,
\begin{align}\label{E:Def-hny}
 h_{n}(t,y) :=\int_0^t \ud s \: h_{n-1}(s,y) k(t-s)T_{\nu/4}(t-s,y),
\end{align}
where
\[
T_\nu(t,x):=\exp\left(-\frac{|x|^2}{\nu t}\right).
\]
Define
\begin{align}
H_\nu(t,y;\gamma) := \sum_{n=0}^\infty \gamma^n h_{n}(t,y).
\end{align}
We will use the convention that
\[
h_n(t):=h_n(t,0)\quad\text{and}\quad H_\nu(t;\gamma) :=H_\nu(t,0;\gamma).
\]
% One observation is that
%
% Hence,
% \[
% h_{1}(\infty)<\infty \quad\Leftrightarrow \quad \Upsilon(0)<\infty.
% \]

\begin{lemma} \label{L:EstHt}
For all $t\ge 0$ and $\gamma\ge 0$,
\[
H_\nu(t;\gamma)\le \exp\left(\theta t\right),
\]
where this constant $\theta$ can be chosen as
\begin{align}\label{E:Var}
\theta:=\theta(\nu,\gamma)=\inf\left\{\beta>0:  \:\Upsilon\left(2\beta/\nu\right) < \frac{\nu}{2\gamma}\right\}.
\end{align}
% \textcolor{red}{I think \eqref{E:Var} is the same equation as in \cite[Theorem 2.1]{FK09EJP}. Do you think that Davar's comment (4) in his email refers to this paper with Mohammud?)}
Moreover, if $\Upsilon(0)<\infty$ and $\gamma <2\Upsilon(0)/\nu$, then
\[
H_\nu(t;\gamma)\le \frac{\nu}{\nu-2 \gamma \Upsilon(0)} \quad\text{for all $t\ge 0$.}
\]
\end{lemma}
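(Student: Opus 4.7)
The plan is to exploit the pure-convolution structure of $h_n(\cdot,0)$ and reduce the estimate to a Laplace-transform computation. First, I would observe that at $y=0$ the weight $T_{\nu/4}(t-s,0)=1$, so the recursion \eqref{E:Def-hny} collapses to $h_n(t)=(h_{n-1}*k)(t)$ where we write $h_n(t)\equiv h_n(t,0)$; iterating yields the simplex representation
\[
h_n(t) \;=\; \int_{\Delta_n(t)} k(u_1)\cdots k(u_n)\,\ud u_1\cdots \ud u_n, \qquad \Delta_n(t):=\{u\in\R_+^n : u_1+\cdots+u_n\le t\}.
\]

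Next, using the spectral formula \eqref{E:k-F} together with Fubini, I would compute the Laplace transform of $k$,
\[
\tilde k(\beta) \;:=\; \int_0^\infty e^{-\beta t} k(t)\,\ud t \;=\; \frac{2}{\nu}\,\Upsilon\!\left(\frac{2\beta}{\nu}\right),
\]
so that the defining condition $\Upsilon(2\beta/\nu)<\nu/(2\gamma)$ for $\theta$ is exactly $\gamma\tilde k(\beta)<1$. This is the identity that bridges the spectral quantity $\Upsilon$ and the time-domain convolution semigroup generated by $k$.

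Then, for any $\beta>\theta$ I would apply an exponential-tilt estimate. Since $\sum_j u_j\le t$ on $\Delta_n(t)$, we have $e^{-\beta t}\le e^{-\beta\sum_j u_j}$, so by Fubini
\[
e^{-\beta t}\,h_n(t) \;\le\; \int_{\R_+^n} \prod_{j=1}^n e^{-\beta u_j} k(u_j)\,\ud u \;=\; \tilde k(\beta)^n.
\]
Summing the resulting geometric series in $n$ gives $H_\nu(t;\gamma) \le e^{\beta t}/(1-\gamma\tilde k(\beta))$ for every $\beta>\theta$, and the first claimed bound $H_\nu(t;\gamma)\le e^{\theta t}$ emerges as the envelope of these estimates as $\beta\downarrow\theta$. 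For the second assertion the hypothesis $\gamma<\nu/(2\Upsilon(0))$ makes $\beta=0$ admissible, and specialising the very same bound at $\beta=0$ yields $H_\nu(t;\gamma)\le 1/(1-\gamma\tilde k(0))=\nu/(\nu-2\gamma\Upsilon(0))$, uniformly in $t$.

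The main obstacle is the behaviour at the critical value $\beta=\theta$: the geometric-series prefactor $(1-\gamma\tilde k(\beta))^{-1}$ diverges as $\beta\downarrow\theta$, so extracting the constant $1$ in front of $e^{\theta t}$ in the first bound demands a refinement of the tilting argument—either a renewal/Volterra-type treatment of the integral equation $H_\nu(t;\gamma)=1+\gamma\,(k*H_\nu(\cdot;\gamma))(t)$ obtained by interchanging the sum with the recursion, or a sharper exponential tilt that hits the Malthusian rate exactly. Modulo this single technical point, the proof reduces entirely to the identity $\tilde k(\beta)=(2/\nu)\Upsilon(2\beta/\nu)$, Fubini, and a geometric series.
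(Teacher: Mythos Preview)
Your approach is essentially the paper's: both exploit the convolution structure $h_n=h_{n-1}*k$ to obtain $\int_0^\infty e^{-\beta t}h_n(t)\,\ud t=\beta^{-1}\tilde k(\beta)^n$ with $\tilde k(\beta)=(2/\nu)\,\Upsilon(2\beta/\nu)$, and then sum the geometric series. The constant-$1$ issue you flag at $\beta=\theta$ is not resolved in the paper's proof either---after the Laplace computation it simply declares that ``the smallest $\beta$ that satisfies \eqref{E:Var} gives the constant $\theta$''---and in every downstream application only the exponential rate matters, so you have not missed anything the paper supplies.
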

\begin{proof}
Notice that for $\beta>0$,
\[
\int_{\R_+} e^{-\beta t} h_n(t)\ud t
=\frac{1}{\beta}\left(\int_{\R_+}e^{-\beta t} k(t) \ud t\right)^n
=\frac{1}{\beta}\left[\frac{2}{\nu}\: \Upsilon\left(\frac{2\beta}{\nu}\right)\right]^n.
\]
Because
$\Upsilon(\beta)\rightarrow 0$ as $\beta\rightarrow\infty$, by increasing $\beta$,
we can make sure that $2\nu^{-1}\Upsilon\left(2\beta/\nu\right)\gamma <1$.
The smallest $\beta$ that satisfies \eqref{E:Var} gives the constant $\theta$.
When $\Upsilon(0)<\infty$, notice that
\begin{align}\label{E:h1Upsilon}
\lim_{t\rightarrow\infty}h_{1}(t) =\lim_{\beta\rightarrow 0_+} \frac{2}{\nu} \Upsilon(2\beta/\nu).
\end{align}
Hence, by the induction, $h_n(t)\le \left[2 \nu^{-1} \Upsilon(0)\right]^n$ for all $n\ge 0$.
This completes the proof of Lemma \ref{L:EstHt}.
\end{proof}

% \textcolor{red}{(KK: It seems that the lemma below is trivial since the integrand in the definition of $h_n$ is positive.}
% \textcolor{blue}{LC: Here is one counter example:
% $h(t)=\int_0^t (s(t-s))^{-2/3}\ud s=C t^{-1/3}$, which is decreasing in $t$ even though the integrand is nonnegative.
% The presence of $t$ in the integrand makes the statement nontrivial.
% }

Even though the integrand in the definition of $h_n$ is positive, due to the presence of $t$ in the integrand,
the following result is nontrivial (considering, e.g., $\int_0^t (s(t-s))^{-2/3}\ud s=C t^{-1/3}$).

\begin{lemma}\label{L:hnInc}
For $n\ge 0$ and $y\in\R^d$, all functions $t\in \R_+\mapsto h_n(t,y)$ are nondecreasing.
\end{lemma}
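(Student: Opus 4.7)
The plan is to proceed by induction on $n$, using a change-of-variables trick that decouples the $t$-dependence in the ``$t$-dependent weight'' from the part that is being integrated against $h_{n-1}$. The base case $n=0$ is immediate since $h_0(\cdot,y)\equiv 1$.

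For the inductive step, I would first rewrite the recursion \eqref{E:Def-hny} by substituting $u = t-s$:
\begin{equation*}
h_n(t,y) \;=\; \int_0^t h_{n-1}(t-u,y)\,k(u)\,T_{\nu/4}(u,y)\,\ud u.
\end{equation*}
The crucial feature of this reformulation is that the weight $k(u)\,T_{\nu/4}(u,y)$ no longer depends on $t$, and is nonnegative. This is what sidesteps the issue flagged in the remark preceding the lemma: in the original form, the $t$-dependent factor $k(t-s)T_{\nu/4}(t-s,y)$ is not monotone in $t$ (since $k$ is nonincreasing while $T_{\nu/4}(\cdot,y)$ is nondecreasing), so one cannot argue monotonicity pointwise in the integrand.

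Now fix $0\le t_1\le t_2$ and assume the inductive hypothesis that $t\mapsto h_{n-1}(t,y)$ is nondecreasing. Then
\begin{align*}
h_n(t_2,y)-h_n(t_1,y)
&=\int_0^{t_1}\bigl[h_{n-1}(t_2-u,y)-h_{n-1}(t_1-u,y)\bigr]k(u)T_{\nu/4}(u,y)\,\ud u \\
&\quad +\int_{t_1}^{t_2}h_{n-1}(t_2-u,y)k(u)T_{\nu/4}(u,y)\,\ud u .
\end{align*}
Both integrals are nonnegative: the first by the inductive hypothesis (note $t_2-u\ge t_1-u$) together with nonnegativity of $k$ and $T_{\nu/4}$, and the second because every factor in the integrand is nonnegative. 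Hence $h_n(t_2,y)\ge h_n(t_1,y)$, closing the induction.

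I do not anticipate a significant obstacle: once one spots the change of variables $u=t-s$, the proof reduces to an elementary induction on a convolution with a \emph{fixed} nonnegative kernel. The only point requiring care is justifying the nonnegativity of $k$ (which follows from $f\ge 0$ and $G>0$ in the definition \eqref{E:h1}) and of $T_{\nu/4}$, both of which are immediate from their definitions.
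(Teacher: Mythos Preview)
Your proposal is correct and takes essentially the same approach as the paper's proof. The paper also performs the change of variables (implicitly, by writing $h_{n+1}(t+\epsilon,y)=\int_0^{t+\epsilon} h_n(t+\epsilon-s,y)\,k(s)\,T_{\nu/4}(s,y)\,\ud s$) and then uses nonnegativity of the integrand together with the inductive hypothesis to compare $h_{n+1}(t+\epsilon,y)$ with $h_{n+1}(t,y)$; your version with $t_1\le t_2$ and the explicit splitting of the difference is the same argument written out in slightly more detail.
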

\begin{proof}
Fix $y\in\R^d$. The case $n=0$ is true by definition. Suppose that it is true for $n$. For all
$\epsilon\ge 0$, by the induction assumption,
\begin{align*}
h_{n+1}(t+\epsilon,y)&=\int_0^{t+\epsilon} \ud s \: h_n(t+\epsilon-s,y) k(s)T_{\nu/4}(s,y)\\
&\ge \int_0^{t} \ud s \: h_n(t+\epsilon-s,y) k(s)T_{\nu/4}(s,y)\\
&\ge \int_0^{t} \ud s \: h_n(t-s,y) k(s)T_{\nu/4}(s,y)=h_{n+1}(t,y).
\end{align*}
This proves the lemma.
\end{proof}

Recall the convention \eqref{E:Conv} for $\calL_{n}$.
\begin{lemma}\label{L:calK}
Suppose that the correlation function  $f$ satisfies Dalang's condition \eqref{E:Dalang}.
Then for all $n\ge 1$, $t\ge 0$, $x,x', y\in\R^d$,
\begin{align}\label{E:calL-hUp}
\calL_n(t,x,x';y) &\le 2^n \: G(t,x)\: G(t,x') h_n(t),\\
\calL_n(t,x,x';y)&\ge  (2\sqrt{3}\:)^{-nd}G(t,x) G(t,x')T_\nu(t,x-x')
 \: h_n\left(t/2,y\right),
\label{E:calL-hLow}
\end{align}
Hence,
\begin{align}\label{E:CalK-Up}
\calK_\lambda(t,x,x';y)&\le
\calL_0(t,x,x') H_\nu\left(t;2\lambda^2\right) ,\\
\label{E:CalK-Low}
\calK_\lambda(t,x,x';y)&\ge
\calL_0(t,x,x') T_\nu(t,x-x')H_\nu(t/2,y;(2\sqrt{3}\:)^{-d}\lambda^2).
% &\ge 5^{-d} \calL_0(t/5,x,x')H_\nu^*(t/2;6^{-d/2}\lambda^2).
% \label{E:CalK-Low2}
\end{align}
\end{lemma}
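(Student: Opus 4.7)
The plan is to prove both inequalities for $\calL_n$ by induction on $n$; the bounds on $\calK_\lambda$ then follow by termwise summation, identifying the series with $H_\nu(t,y;\gamma)=\sum_n\gamma^n h_n(t,y)$. The base case $n=0$ is immediate, since $\calL_0(t,x,x';y)=G(t,x)G(t,x')$, $h_0\equiv 1$, and $T_\nu(t,x-x')\le 1$.

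The backbone of the inductive step is the Chapman--Kolmogorov factorisation
\[
G(t-s,x-z)\,G(s,z) \;=\; G(t,x)\,q_{t,s,x}(z),
\]
where $q_{t,s,x}(z)$ is the Gaussian density in $z$ with mean $sx/t$ and variance $\nu s(t-s)/t$ (a Brownian-bridge density). Applying this identity to both factors of $\calL_0(t-s,x-z,x'-z')$ converts the spatial double integral in $\calL_0\rhd\calL_{n-1}$ into $G(t,x)G(t,x')$ times a one-dimensional integral against $p_{s,t,x-x'}$, the Gaussian density in $u=z-z'$ with mean $s(x-x')/t$ and variance $2\nu s(t-s)/t$.

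For the upper bound, the inductive hypothesis reduces the step to the assertion $\int_0^t h_{n-1}(s)(f*p_{s,t,x-x'})(y)\,\ud s \le 2\,h_n(t)$. Since $\hat f$ is a symmetric nonnegative measure, Parseval yields the spectral bound
\[
(f*p_{s,t,x-x'})(y) \;\le\; (2\pi)^{-d}\int \hat f(\ud\xi)\,e^{-\nu s(t-s)|\xi|^2/t}.
\]
Splitting the $s$-integral at $t/2$: on $[0,t/2]$ one uses $s(t-s)/t\ge s/2$ to dominate the spectral factor by that of $k(s)$; on $[t/2,t]$ the substitution $s\mapsto t-s$ reduces to the first case with $k(t-s)$. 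Each half is then bounded by $h_n(t)$ via monotonicity of $h_{n-1}$ (Lemma \ref{L:hnInc}), producing the factor $2$ and hence $2^n$ after $n$ iterations.

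The lower bound is more delicate because the induction hypothesis inserts a $T_\nu(s,u)$ factor after Chapman--Kolmogorov, and one must simultaneously extract $T_\nu(t,x-x')$ and the $y$-dependent factor $T_{\nu/4}$. The plan is to restrict the time integral to $s\in[0,t/2]$ and lower bound the integrand $p_{s,t,x-x'}(u)\,T_\nu(s,u)\,f(y-u)$ by applying Young's inequality $|a+b|^2\le(1+\alpha)|a|^2+(1+1/\alpha)|b|^2$ several times inside the exponentials: first to decouple the mean shift $s(x-x')/t$ from $u$ in $p_{s,t,x-x'}$, combining the residual coefficient of $|x-x'|^2$ (at most $1/(\nu t)$ on $[0,t/2]$, via $s/(t(t-s))\le 1/t$) to produce $T_\nu(t,x-x')$; then, after the substitution $u=y-v$, to decouple $|y|^2$ from $|v|^2$ in the remaining Gaussian, producing $T_{\nu/4}$. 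The residual integral against $f(v)$ becomes $k$ of an appropriate time. Matching the result against the recursion $h_n(t/2,y) = \int_0^{t/2}h_{n-1}(\sigma,y)k(t/2-\sigma)T_{\nu/4}(t/2-\sigma,y)\,\ud\sigma$ through the change of variables $\sigma=s/2$ closes the induction, and the step constant $(2\sqrt 3)^{-d}$ emerges by choosing the Young parameters so that both $T_\nu(t,x-x')$ and $T_{\nu/4}$ appear with coefficient exactly $1$ while the residual is a valid Gaussian density. The main obstacle is precisely this coordinated tuning: the $T_\nu(s,u)$ factor from the induction hypothesis, the mean shift of $p_{s,t,x-x'}$, and the $y$-dependence must all be handled in concert so that the time arguments after substitution match those in the recursion for $h_n(t/2,y)$, and optimising the Young parameters is what pins down the sharp constant $(2\sqrt 3)^{-d}$ per step.
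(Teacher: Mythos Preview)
Your upper-bound argument is correct and coincides with the paper's: Brownian-bridge factorisation, the spectral bound via $\hat f\ge 0$, and the split at $t/2$ using $s(t-s)/t\ge s/2$ together with monotonicity of $h_{n-1}$.

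The lower-bound sketch, however, has a real gap in the matching step, and the paper's route is different in a way that matters. With $s$ restricted to $[0,t/2]$ and the induction hypothesis producing $h_{n-1}(s/2,y)$, your substitution $\sigma=s/2$ only sweeps $\sigma\in[0,t/4]$, so you recover at most $2\int_0^{t/4}h_{n-1}(\sigma,y)\,k(t/2-\sigma)\,T_{\nu/4}(t/2-\sigma,y)\,\ud\sigma$, which is \emph{not} in general $\ge h_n(t/2,y)$ (the integrand is typically increasing in $\sigma$, since $h_{n-1}$ is nondecreasing and $k$ is nonincreasing). So the induction does not close as stated, and the claim that optimising Young parameters ``pins down'' $(2\sqrt{3})^{-d}$ is unsupported. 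The paper closes the induction differently: it applies the hypothesis at time $t-s$ (so the extra factor is $T_\nu(t-s,(x-z)-(x'-z'))$, not $T_\nu(s,u)$), uses the monotonicity $h_{n-1}((t-s)/2,y)\ge h_{n-1}(t/2-s,y)$ from Lemma~\ref{L:hnInc}, and then---this is the key device you are missing---absorbs the residual factor into the correlation by observing that $z\mapsto f(z)\,T_{\nu/2}(r,z)$ is again symmetric, nonnegative and nonnegative-definite. That lets one rerun the $n=1$ Fourier argument verbatim with this modified correlation, and the constants $2^{-d}$ (from the decoupling \eqref{E:GDeltaX}) and $3^{-d/2}$ (from comparing $T_{\nu/2}(r,z)G(s/2,z)$ with $G(s/6,z)$) combine to give exactly $(2\sqrt{3})^{-d}$ per step. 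A final change of variables and the bound $T_{\nu/4}(r,y)\ge T_{\nu/4}(t-r,y)$ on $r\in[t/2,t]$ then reproduce precisely the recursion $h_{n+1}(t/2,y)=\int_0^{t/2}h_n(t/2-s,y)k(s)T_{\nu/4}(s,y)\,\ud s$. Your Young-inequality scheme could perhaps be salvaged, but not with the $\sigma=s/2$ matching; you would need either the monotonicity trick above or a different time partition.
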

\begin{proof}
By definition,
\begin{align*}
\calL_1(t,x,x';y) = \int_0^t\ud s\iint_{\R^{2d}}\ud z\ud z' \: &G(s,z) G(s,z') f(y-(z-z'))\\
\times & G(t-s,x-z) G(t-s,x'-z').
\end{align*}
Notice that (see \cite[Lemma 5.4]{ChenDalang13Heat})
\[
G(s,z)G(t-s,x-z) =
G\left(\frac{s(t-s)}{t},z-\frac{s}{t}x\right)G(t,x)
\]
and similar for the other pair.
So
\begin{align}
\label{E:GGf}
\begin{aligned}
\calL_1(t,x,x';y) =& \quad G(t,x)G(t,x')\int_0^t\ud
s \iint_{\R^{2d}}\ud z \ud z' \: f(y-(z-z')) \\
&\times G\left(\frac{s(t-s)}{t},z-\frac{s}{t} x\right)
G\left(\frac{s(t-s)}{t},z'-\frac{s}{t}x'\right).
\end{aligned}
\end{align}
Because
\[
\calF[G(t,\circ)](\xi) = \exp\left(-\frac{\nu t}{2} |\xi|^2 \right),
\]
the double integral over $\ud z\ud z'$ in \eqref{E:GGf} is equal to
\begin{multline}\label{E:FhatG}
(2\pi)^{-d}\int_{\R^d}\: \hat{f}(\ud \xi ) \exp\left(i \left(y-\frac{s}{t}(x-x')\right)\cdot \xi
-\frac{\nu s(t-s)}{t}|\xi|^2\right)\\
=\int_{\R^d}\ud z\; f(z) G\left(\frac{2s(t-s)}{t},z+y-\frac{s}{t}(x-x')\right).
\end{multline}

Now let us prove \eqref{E:calL-hUp}. From \eqref{E:GGf} and \eqref{E:FhatG}, it is clear that
\begin{align}\label{E:L1}
\calL_1(t,x,x';y)\le (2\pi)^{-d} G(t,x)G(t,x')
\int_0^t \ud s \int_{\R^d}  \hat{f}(\ud \xi)\exp\left(-\frac{\nu s(t-s)}{t}|\xi|^2\right).
\end{align}
Because $s/2 \le s(t-s)/t$ for $s\in [0,t/2]$, by symmetry, the above double integral is equal to
\begin{align*}
 2\int_0^{t/2} \ud s \int_{\R^d}  \hat{f}(\ud \xi)\exp\left(-\frac{\nu s(t-s)}{t}|\xi|^2\right)
 &\le 2 \int_0^{t/2} \ud s \int_{\R^d}  \hat{f}(\ud \xi)\exp\left(-\frac{\nu s}{2}|\xi|^2\right)\\
&= 2(2\pi)^d\int_0^{t/2} k(s) \ud s =2 (2\pi)^d h_1(t/2)\\
&\le 2 (2\pi)^d h_1(t),
\end{align*}
where in the last step we have applied Lemma \ref{L:hnInc}.
The induction step is routine. This proves \eqref{E:calL-hUp} and hence \eqref{E:CalK-Up}.

As for the lower bound, we first prove the case $n=1$.
Because  $f$ is nonnegative and
% \begin{align}\notag
% G\left(\frac{2s(t-s)}{t},z+y-\frac{s}{t}(x-x')\right)
% &\ge 2^{-\frac{d}{2}} G\left(\frac{s(t-s)}{t},z+y\right) T_\nu(t,x-x')\\
% &
% \ge 2^{-d} G\left(\frac{s(t-s)}{2t},z\right) T_\nu(t,x-x')T_{\nu/2}(s,y),
% \label{E:GDeltaX}
% \end{align}
% \textcolor{red}{(I cannot get the inequality \eqref{E:GDeltaX}. Could you explain it?)}
% \textcolor{blue}{(This is because $|z+y|^2\le 2|z|^2+2|y|^2$, and then apply the definitions of $G$ and $T_\nu$. Note that the time
% parameter in the function $G$ of \eqref{E:GDeltaX} has been divided by $2$, which contribute the factor $2^{-d/2}$.)}
% \textcolor{red}{(KK: It seems that  (2.2) is still not correct since $s(t-s)/t < s$. Can you check here? I think that if we use $T_{\nu/4}$ instead of $T_{\nu/2}$, then it would be okay.)}
% \textcolor{blue}{(LC: Here are more details (you may use the wrong definition of $T_\nu$):
\begin{align}\notag
G\left(\frac{2s(t-s)}{t},z+y-\frac{s}{t}(x-x')\right)
&\ge 2^{-\frac{d}{2}} G\left(\frac{s(t-s)}{t},z+y\right) T_\nu(t,x-x')\\
\notag
&= 2^{-\frac{d}{2}} (2\pi\nu s(t-s)/t)^{-d/2}e^{-\frac{|z+y|^2}{2\nu s(t-s)/s}} T_\nu(t,x-x')\\
\notag
&\ge 2^{-\frac{d}{2}} (2\pi\nu s(t-s)/t)^{-d/2}e^{-\frac{|z|^2+|y|^2}{\nu s(t-s)/s}} T_\nu(t,x-x')\\
\notag
&= 2^{-d} (\pi\nu s(t-s)/t)^{-d/2}e^{-\frac{|z|^2}{\nu s(t-s)/s}} e^{-\frac{|y|^2}{\nu s(t-s)/s}} T_\nu(t,x-x')\\
\notag
&= 2^{-d} G\left(\frac{s(t-s)}{2t},z\right) e^{-\frac{|y|^2}{\nu s(t-s)/s}} T_\nu(t,x-x')\\
\notag
&\ge 2^{-d} G\left(\frac{s(t-s)}{2t},z\right) e^{-\frac{|y|^2}{\nu s/2}} T_\nu(t,x-x')\\
&= 2^{-d} G\left(\frac{s(t-s)}{2t},z\right) T_\nu(t,x-x')T_{\nu/2}(s,y),
\label{E:GDeltaX}
\end{align}
where we have used the fact that $s(t-s)/t \ge s/2$, which is equivalent to $s\in [0,t/2]$.
% }
we see that from \eqref{E:GGf} and \eqref{E:FhatG},
\begin{align*}
\calL_1(t,x,x';y)\ge &
2^{-d} G(t,x) G(t,x') T_\nu(t,x-x')\\
&\times \int_{0}^{t/2}\ud s \: T_{\nu/2}(s,y) \int_{\R^d} \ud z\: f(z)
G\left(\frac{s(t-s)}{2t},z\right)\\
\ge &
2^{-d} G(t,x) G(t,x') T_\nu(t,x-x')\\
&\times \int_{0}^{t/2}\ud s \: T_{\nu/4}(s,y) \int_{\R^d} \ud z\: T_{\nu/2}(t-s,z)f(z)
G\left(\frac{s(t-s)}{2t},z\right).
\end{align*}
Because the function $z\mapsto f(z) T_{\nu/2}(t-s,z)$ is a valid correlation function, i.e., it is symmetric, nonnegative, and nonnegative-definite,
by taking Fourier transform and since $s(t-s)/(2t)\le s/2$, one can see that
\begin{align}\notag
\int_{\R^d} \ud z\: T_{\nu/2}(t-s,z) f(z)  G\left(\frac{s(t-s)}{2t},z\right)
& \ge
\int_{\R^d} \ud z\: T_{\nu/2}(t-s,z) f(z)  G\left(s/2,z\right)\\ \notag
&\ge  3^{-d/2} \int_{\R^d} \ud z\: f(z)  G(s/6,z)\\
&\ge  3^{-d/2} \int_{\R^d} \ud z\: f(z)  G(s,z)\ge 3^{-d/2} k(s),
\label{E_:TfG}
\end{align}
Hence,
\begin{align*}
 \calL_1(t,x,x';y)\ge
2^{-d}3^{-d/2} G(t,x) G(t,x') T_\nu(t,x-x')\int_{0}^{t/2} T_{\nu/4}(s,y) k(s) \ud s,
% \label{E:L1Ind}
\end{align*}
where the integral is equal to $h_1(t/2,y)$.
Therefore, the case $n=1$ is true.

Assume that \eqref{E:calL-hLow} is true up to $n$. Then
\begin{align}\notag
 \calL_{n+1}(t,x,x';y)= & \left(\calL_0\rhd\calL_n\right)(t,x,x';y)\\ \notag
\ge& (2\sqrt{3}\:)^{-nd}  \int_0^{\frac{t}{2}}\ud s  \:h_n\left(\frac{t-s}{2},y\right)
\iint_{\R^{2d}}\ud z\ud z' \; G(s,z)G(s,z')\\ \notag
&\times G(t-s,x-z)G(t-s,x'-z') \\ \notag
&\times T_\nu(t-s,(x-z)-(x'-z')) f(y-[(x-z)-(x'-z')])\\ \notag
\ge &
(2\sqrt{3}\:)^{-nd} \int_0^{\frac{t}{2}}\ud s  \:h_n\left(t/2-s,y\right)
\iint_{\R^{2d}}\ud z\ud z' \; G(s,x-z)G(s,x'-z')\\ \notag
&\times G(t-s,z)G(t-s,z')  T_\nu(t-s,z-z') f(y-[z-z'])\\ \notag
= &
(2\sqrt{3}\:)^{-nd} \int_{\frac{t}{2}}^t \ud r  \:h_n\left(r-t/2,y\right)
\iint_{\R^{2d}}\ud z\ud z' \; G(r,z)G(r,z')\\
&\times   G(t-r,x-z)G(t-r,x'-z') T_\nu(r,z-z') f(y-[z-z'])
,\label{E_:Ln+1}
\end{align}
where we have used the fact that $s\mapsto h_n(s,y)$ is nondecreasing (Lemma \ref{L:hnInc}).
Notice that
\[
T_\nu(r,z-z') \ge T_{\nu/2}(r,y-(z-z'))T_{\nu/2}(r,y).
\]
By the same arguments as those in \eqref{E:GGf} and \eqref{E:FhatG}
with the correlation function $f(z)$ replaced by $z\mapsto f(z) T_{\nu/2}(r,z)$, the double integral
$\ud z\ud z'$ in \eqref{E_:Ln+1} becomes
\[
G(t,x)G(t,x') T_{\nu/2}(r,y) \int_{\R^d} \ud z\: T_{\nu/2}(r,z) f(z)  G\left(\frac{2r(t-r)}{t},z+y-\frac{r}{t}(x-x')\right).
\]
By \eqref{E:GDeltaX}, the above quantity is bounded from below by
% \textcolor{red}{(I cannot get the below inequality; It seems $x-x'$ missing in $T_{\nu/2}$. Could you explain it?)}
% \textcolor{blue}{(This is an application of the lower bound in \eqref{E:GDeltaX} and an application of the inequality three lines below.
% So it seems right here. Please let me know if you agree.)} \textcolor{red}{(KK: I think that this case is different from (2.21) and (2.22) since $G(s,x-z)G(t-s,z)=G(t,x)G(s(t-s)/t, z-x+sx/t)$. Because of this, I do not get the equations right above and below. Could you check this?)}
% \textcolor{blue}{(LC: I skipped some steps. In the new version I will come up with more details. The results should be fine.)}
\[
2^{-d}G(t,x)G(t,x') T_{\nu}(t,x-x') T_{\nu/4}(r,y)
\int_{\R^d} \ud z\: T_{\nu/2}(r,z) f(z)  G\left(\frac{r(t-r)}{2t},z\right),
\]
where we have used the fact that $T_{\nu/2}(r,y)^2=T_{\nu/4}(r,y)$.
% : for $r\in\:]t/2,t[\:$,
% \[
% T_{\nu/2}(r,y)T_{\nu/2}(t-r,y)\ge T_{\nu/2}(t-r,y)T_{\nu/2}(t-r,y) =T_{\nu/4}(t-r,y).
% \]
Then apply \eqref{E_:TfG} with $s$ replaced by $t-r$ to get
\begin{align*}
 \calL_{n+1}(t,x,x';y)= &
(2\sqrt{3}\:)^{-(n+1)d} G(t,x)G(t,x')T_\nu(t,x-x')\\
&\times \int_{t/2}^t \ud s \: h_n(r-t/2,y) T_{\nu/4}(r,y) k(t-r).
\end{align*}
Because $r\in \:]t/2,t[\:$, $T_{\nu/4}(r,y)\ge T_{\nu/4}(t-r,y)$. Hence,
\begin{align*}
\int_{t/2}^t \ud s \: h_n(r-t/2,y) T_{\nu/4}(r,y) k(t-r)
&\ge
\int_{t/2}^t \ud s \: h_n(r-t/2,y) T_{\nu/4}(t-r,y) k(t-r)\\
&=\int_0^{t/2} h_n(t/2-s,y)T_{\nu/4}(s,y) k(s),
\end{align*}
where the integral is equal to $h_{n+1}(t/2,y)$. This proves the case $n+1$ and \eqref{E:calL-hLow}.
Finally, \eqref{E:CalK-Low} is a direct consequence of \eqref{E:calL-hLow}.
% As for \eqref{E:CalK-Low2}, apply $T_\nu(t,x-x')\ge T_\nu(t/2,x)T_\nu(t/2,x')$.
This completes the whole proof of Lemma \ref{L:calK}.
\end{proof}

\begin{lemma}\label{L:InDt}
For all $\mu\in\calM_H(\R^d)$ and all $t\ge 0$, $x,\: x'\in\R^d$,
\begin{align}\label{E:CalKJ1}
\left(\calK\rhd J_1\right)(t,x,x';0) =& \lambda^{-2} \iint_{\R^{2d}}\mu(\ud y)\mu(\ud y')
\calK(t,x-y,x'-y';y'-y) -J_1(t,x,x').
\end{align}
\end{lemma}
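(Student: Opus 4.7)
The plan is to expand $\calK$ as the power series $\calK=\sum_{n\geq 0}\lambda^{2(n+1)}\calL_n$ and reduce \eqref{E:CalKJ1} to a single termwise identity. Writing
\[
\Phi_n(t,x,x'):=\iint_{\R^{2d}}\mu(\ud y)\mu(\ud y')\,\calL_n(t,x-y,x'-y';y'-y),
\]
we have $\Phi_0(t,x,x')=J_0(t,x)J_0(t,x')=J_1(t,x,x')$ since $\calL_0(t,x,x';y)=G(t,x)G(t,x')$ is independent of $y$. Hence the right-hand side of \eqref{E:CalKJ1} equals $\sum_{n\geq 1}\lambda^{2n}\Phi_n$. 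On the other side, distributing the operator $\rhd$ through the series (justified below) and re-indexing yields
\[
(\calK\rhd J_1)(t,x,x';0)=\sum_{n\geq 1}\lambda^{2n}(\calL_{n-1}\rhd J_1)(t,x,x';0).
\]
Thus \eqref{E:CalKJ1} reduces to proving
\[
\Phi_n(t,x,x')=(\calL_{n-1}\rhd J_1)(t,x,x';0),\qquad n\geq 1.
\]

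To establish this key identity, I invoke the associativity of $\rhd$ (Lemma~\ref{L:Associative}) to rewrite $\calL_n=\calL_{n-1}\rhd\calL_0$, so that
\[
\calL_n(t,a,a';b)=\int_0^t\!\ud s\iint_{\R^{2d}}\!\ud z\,\ud z'\,\calL_{n-1}\!\left(t-s,a-z,a'-z';b-(z-z')\right)G(s,z)G(s,z')f(b-(z-z')).
\]
Setting $(a,a',b)=(x-y,x'-y',y'-y)$ and performing the translation $u=z+y$, $u'=z'+y'$ in the inner double integral, the quantity $b-(z-z')=(y'-y)-(z-z')$ simplifies to $u'-u$, so all $(y,y')$-dependence is concentrated in the Gaussian factors $G(s,u-y)G(s,u'-y')$. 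Applying Fubini to carry the $\mu(\ud y)\mu(\ud y')$-integration inside converts those two Gaussians into $J_0(s,u)J_0(s,u')=J_1(s,u,u')$; what remains is exactly $(\calL_{n-1}\rhd J_1)(t,x,x';0)$ (using the symmetry $f(-v)=f(v)$). This translation step is the heart of the proof and the only substantive calculation.

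The remaining task is to justify the interchange of the infinite summation with the $\rhd$-convolution and with the $\mu\otimes\mu$-integration. This is where the upper bounds in Lemma~\ref{L:calK} are essential: combined with Lemma~\ref{L:EstHt}, they give
\[
\sum_{n\geq 0}\lambda^{2n}\calL_n(t,x-y,x'-y';y'-y)\leq G(t,x-y)G(t,x'-y')\,e^{\theta t},
\]
whose $|\mu|\otimes|\mu|$-integral is finite for every $\mu\in\calM_H(\R^d)$, since by definition this is precisely the condition that $\iint G(t,x-y)G(t,x'-y')|\mu|(\ud y)|\mu|(\ud y')<\infty$. Dominated convergence and Fubini then legitimize every manipulation above. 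I do not anticipate any serious obstacle beyond the translation trick; everything else is bookkeeping driven by the translation-invariance of the heat kernel and the fact that $f$ depends only on the displacement $z-z'$ shifted by the separation $y-y'$.
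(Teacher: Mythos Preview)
Your proposal is correct and follows essentially the same route as the paper. The paper works with $\calK$ as a whole rather than termwise: it expands $J_1$ in the integral $(\calK\rhd J_1)(t,x,x';0)$, performs the same translation $\hat z=z-y$, $\hat z'=z'-y'$ (the inverse of your $u=z+y$, $u'=z'+y'$) to obtain $\iint\mu(\ud y)\mu(\ud y')\,(\calK\rhd\calL_0)(t,x-y,x'-y';y'-y)$, and then invokes the recursion $\calK\rhd\calL_0=\lambda^{-2}\calK-\calL_0$. Your termwise identity $\Phi_n=(\calL_{n-1}\rhd J_1)(\cdot;0)$ is exactly this recursion unpacked level by level, so the two arguments coincide; your version has the minor advantage of making the Fubini/dominated-convergence justification explicit.
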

\begin{proof}
We first prove \eqref{E:CalKJ1}.
Writing $J_0(t,z)$ and $J_0(t,z')$ in the integral forms and applying the arguments in the proof of Lemma \ref{L:calK},
we see that
\begin{align*}
 \left(\calK\rhd J_1\right)(t,x,x';0) =&
\int_0^t \ud s \iint_{\R^{2d}}\ud z\ud z' \iint_{\R^{2d}}\mu(\ud y)\mu(\ud y')\:  G(s,z-y)G(s,z'-y')\\
&\times f(z'-z) \calK(t-s,x-z,x'-z';z'-z).
\end{align*}
By change of variables, $\hat{z}=z-y$ and $\hat{z}'=z'-y'$,  and by Fubini's theorem,
\begin{align*}
 \left(\calK\rhd J_1\right)(t,x,x';0) =&
\iint_{\R^{2d}}\mu(\ud y)\mu(\ud y') \int_0^t \ud s \iint_{\R^{2d}}\ud \hat{z}\ud \hat{z}' \:   f((y'-y)-(\hat{z}-\hat{z}'))  \\
&\times G(s,\hat{z})G(s,\hat{z}') \calK(t-s,x-y-\hat{z},x'-y'-\hat{z}';(y'-y)- (\hat{z}-\hat{z}'))\\
=&\iint_{\R^{2d}}\mu(\ud y)\mu(\ud y')
\left(\calK\rhd\calL_0\right)(t,x-y,x'-y';y'-y).
\end{align*}
% \textcolor{red}{(Please check the last equality above. It seems that because of the changes above, this equality is not obvious.)}
% \textcolor{blue}{(Yes, there is a typo. I have changed it. Can you check it again?)}
Then use the recursion $\calK\rhd \calL_0 = \lambda^{-2}\calK - \calL_0$ to get \eqref{E:CalKJ1}.
% This completes the proof of Lemma \ref{L:InDt}.
\end{proof}

\bigskip
\begin{proof}[Proof of Theorem \ref{T:ExUni}]
The proof follows the same six steps as those in the proof of
\cite[Theorem 2.4]{ChenDalang13Heat} with some minor changes:

(1) Both proofs rely on estimates on the kernel function $\calK$.
Instead of an explicit formula as for the heat equation case (see
\cite[Proposition 2.2]{ChenDalang13Heat}), Lemma \ref{L:calK} ensures the finiteness and provides a bound on the kernel function $\calK$.

(2) In the Picard iteration scheme (Steps 1--4 in the proof of \cite[Theorem 2.4]{ChenDalang13Heat}),
we need to check the $L^p(\Omega)$-continuity of the stochastic integral,
which then guarantees that at the next step, the integrand is again in $\calP_2$, via \cite[Proposition 3.4]{ChenDalang13Heat}.
The statement of \cite[Proposition 3.4]{ChenDalang13Heat} is still true for $G(t,x)$ on $\R^d$; see \cite[Proposition 2.3.13]{LeChen13Thesis}.
Note that during each iteration, the measurability is guaranteed by Proposition \ref{P:Pm}
(in place of \cite[Proposition 3.1]{ChenDalang13Heat})

(3) In the first step of the Picard iteration scheme, the following property is useful:
For all compact sets $K\subseteq \R_+\times\R^d$,
\[
\sup_{(t,x)\in K}\left(\calK\rhd \left[1+J_1\right] \right) (t,x,x;0)<+\infty.
\]
For the heat equation, this property is discussed in \cite[Lemma 3.9]{ChenDalang13Heat}.
Here, Lemma \ref{L:InDt} gives the desired result with minimal requirements on the initial data.
This property, together with the calculation of  the upper bound on the function $\calK$ in Lemma \ref{L:calK}, guarantees that
all the $L^p(\Omega)$-moments of $u(t,x)$ are finite.
This property is also used to establish uniform convergence of the Picard iteration scheme, hence $L^p(\Omega)$--continuity of $(t,x)\mapsto I(t,x)$.

(4) The moment formula \eqref{E:SecMom} is clear from the Picard iterations.
The formula \eqref{E:SecMom2} is due to Lemma \ref{L:InDt}.

As for \eqref{E:SecMom-Up}, we only need to consider the nonlinear case.
By \eqref{E:WeakCom}, the function $g(t,x,x')=\E\left[u(t,x)u(t,x')\right]$ satisfies
\eqref{E:Recursion} with ``$=$", $\lambda$ and $\calK$ replaced by
``$\le$" $\Lip_\rho$ and $\overline{\calK}$, respectively.
% \begin{multline}\label{E_:Recursion}
% g(t,x,x')\le (\mu*G(t,\cdot))(x)(\mu*G(t,\cdot))(x')+\Lip_\rho^2 \int_0^t\ud s \iint_{\R^{2d}}\ud z\ud z'
% \; g(s,z,z')\\
% \times G(t-s,x-z)G(t-s,x'-z') f(z-z').
% \end{multline}
% By comparing \eqref{E_:Recursion} with \eqref{E:Recursion}, we see that the moment formula \eqref{E:SecMom2}
% still holds, given that ``$=$" $\lambda$, $\mu$ and $\calK$ are replaced by
% ``$\le$" $\Lip_\rho$, $|\mu|$ and $\overline{\calK}$, respectively.

Similarly, for the lower bound \eqref{E:SecMom-Lower},
thanks to \eqref{E:lip}, the above $g$ function satisfies
\eqref{E:Recursion} with ``$=$''  and $\lambda$ replaced by ``$\ge$'' and $\lip_\rho$, respectively.
Hence, this integral inequality is solved by \eqref{E:SecMom-Lower}, i.e., by \eqref{E:SecMom2}
with ``$=$'' and $\lambda$ replaced by ``$\ge$" and $\lip_\rho$, respectively.

This completes the proof of Theorem \ref{T:ExUni}.
\end{proof}

\section{Conditions for phase transitions: proof of Theorem \ref{T:Phase}}\label{S:Phase}
In this section, we will prove Theorem \ref{T:Phase}. We need some lemmas.
% \textcolor{red}{(KK: Can you tell me where we use ?)}
% \textcolor{blue}{(LC: at }
%
Lemma \ref{L:LowInd} will be used at the end of the proof of Theorem 1.3.
\begin{lemma}\label{L:LowInd}
Fix $a>0$. Let $c=\nu \pi /(2a^2)$. Then for all $t\in\R_+\times\R^d$,
\[
\int_{[-a,a]^d} G(t,y) \ud y  \ge \left(1+c \: t\right)^{-d/2},
\]
and
\[
\int_0^t\ud s \int_{[-a,a]^d} G(s,y) \ud y \ge
\begin{cases}
2c^{-1}\left(\sqrt{c  t+1}-1\right) & \text{if $d=1$,}\\
c^{-1}\log (1+c\: t)& \text{if $d=2$,}\\
2 \left[c (d-2) \right]^{-1}\left(1-(1+c\: t)^{1-d/2} \right)& \text{if $d\ge 3$.}
\end{cases}
\]
\end{lemma}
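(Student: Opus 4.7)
\textbf{Proof proposal for Lemma \ref{L:LowInd}.}
The first bound will follow from the one-dimensional case and tensorization, since $G(t,y)=\prod_{j=1}^d g(t,y_j)$ where $g(t,y)=(2\pi\nu t)^{-1/2}\exp(-y^2/(2\nu t))$, so
\[
\int_{[-a,a]^d}G(t,y)\,\ud y=\left(\int_{-a}^a g(t,y)\,\ud y\right)^d.
\]
It therefore suffices to prove that $\phi(t):=\int_{-a}^a g(t,y)\,\ud y$ satisfies $\phi(t)\ge (1+ct)^{-1/2}$. Squaring and inverting, this is equivalent to $F(t):=1/\phi(t)^2$ obeying $F(t)\le 1+ct$. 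Since $F(0)=1$, I plan to prove the stronger statement $F'(t)\le c$ for all $t>0$.

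The computation of $F'$ is direct from the heat equation $\partial_t g=(\nu/2)\partial_y^2 g$ and integration by parts: using the evenness of $g$ in $y$ together with $\partial_y g(t,y)=-(y/\nu t)g(t,y)$, I get $\phi'(t)=-a\,g(t,a)/t$, whence
\[
F'(t)=-\frac{2\phi'(t)}{\phi(t)^{3}}=\frac{2a\,g(t,a)}{t\,\phi(t)^{3}}.
\]
Introducing the natural scaling variable $v:=a/\sqrt{2\nu t}$, one checks that $\phi(t)=\erf(v)$ and a short computation shows that the inequality $F'(t)\le c=\nu\pi/(2a^{2})$ is equivalent to the pointwise bound
\[
\sqrt{\pi}\,\erf(v)\ \ge\ 2v\,e^{-v^{2}/3},\qquad v>0.
\]
This is the main step of the argument.

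To prove this error-function inequality, I set $H(v):=\sqrt{\pi}\,\erf(v)-2v\,e^{-v^{2}/3}$, note $H(0)=0$, and compute
\[
H'(v)=2e^{-v^{2}}-2\left(1-\tfrac{2v^{2}}{3}\right)e^{-v^{2}/3}=2e^{-v^{2}/3}\bigl(e^{-2v^{2}/3}-(1-\tfrac{2v^{2}}{3})\bigr).
\]
Since $e^{-x}\ge 1-x$ for all real $x$, applied with $x=2v^{2}/3\ge 0$, the bracket is nonnegative; hence $H'\ge 0$ on $\R_+$ and $H\ge 0$, which closes the first bound. The main obstacle is this Step, because the bound is sharp both at $t=0$ and as $t\to\infty$ (where $\erf(v)\sim 1$ and $2v/\sqrt{4v^{2}+\pi}\sim 1$), so no monotonicity of $\phi$ alone is sufficient; the $e^{-v^{2}/3}$ factor and the factor $2/3$ are precisely what is needed for the differential inequality $e^{-x}\ge 1-x$ to kick in.

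The second family of bounds is then obtained by integrating the first one in time. For the three parameter ranges,
\[
\int_{0}^{t}(1+cs)^{-d/2}\ud s=
\begin{cases}
2c^{-1}\bigl(\sqrt{1+ct}-1\bigr) & (d=1),\\[0.3em]
c^{-1}\log(1+ct) & (d=2),\\[0.3em]
\frac{2}{c(d-2)}\bigl(1-(1+ct)^{1-d/2}\bigr) & (d\ge 3),
\end{cases}
\]
by elementary antiderivatives, and these are exactly the claimed lower bounds. No further subtleties arise in this step.
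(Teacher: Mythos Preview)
Your proof is correct and complete. Both you and the paper reduce to $d=1$ by tensorization and then prove $\phi(t)\ge(1+ct)^{-1/2}$, but the arguments for that one-dimensional inequality differ. The paper sets $F(t):=\sqrt{1+ct}\,\phi(t)$, observes $F(0)=\lim_{t\to\infty}F(t)=1$, and asserts (without details) that $F'$ changes sign exactly once, so $F$ is unimodal and hence $F\ge 1$. You instead work with $1/\phi(t)^2$ and establish the differential inequality $\bigl(1/\phi^2\bigr)'\le c$ directly, reducing it via the substitution $v=a/\sqrt{2\nu t}$ to the elementary bound $\sqrt{\pi}\,\erf(v)\ge 2v\,e^{-v^2/3}$, which you dispatch with $e^{-x}\ge 1-x$. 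Your route is more self-contained: it avoids the unimodality analysis the paper leaves implicit and replaces it with a clean pointwise inequality whose proof is one line. The paper's route, by contrast, makes transparent that the bound is sharp at both endpoints $t=0$ and $t=\infty$, which motivates the choice of constant $c$. The time-integration step is identical in both.
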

\begin{proof}
We only need to prove the case where $d=1$. Notice that
\[
\int_{-a}^a G(t,y)\ud y= 2\Phi\left(\frac{a}{\sqrt{\nu t}}\right) -1,
\]
where $\Phi(x)$ is the distribution of the standard normal distribution.
Denote
\[
F(t):= \sqrt{1+\frac{\nu \pi}{2 a^2} t} \left[2 \Phi\left(\frac{a}{\sqrt{\nu t}}\right) -1\right].
\]
Clearly, $F(0)=1$. By l'Hospital's rule, $\lim_{t\rightarrow\infty} F(t)=1$.
By studying $F'(t)$, one can show that for some $t_0>0$, $F(t)$ is nondecreasing over $[0,t_0]$
and nonincreasing over $[t_0,\infty]$.  Therefore, $F(t)\ge 1$. The rest calculations follow Example \ref{Eg:OU}.
\end{proof}

%
% \begin{lemma}\label{L:h1LowBd}
% For all $a>0$, let $C_a:=(a e)^{-1} h_1(a)$. If $\gamma \ge C_a^{-1}$,
% then for all $t\ge 0$,
%  \begin{align}\label{E:h1LowBd}
%   \sum_{n=0}^\infty \gamma^n h_1(t/n)^n \ge \frac{\sqrt{\pi}}{2} \exp\left( t\right).
%  \end{align}
% In particular, the following two statements are true:\\
% (1) If $f(0)<\infty$, then one can choose $\sup_{a> 0}  C_a = C_0 = f(0)/e$.\\
% (2) If $f(0)=\infty$, then for all $\gamma >0$, one can find $a>0$ such that $\gamma>C_a^{-1}$ and \eqref{E:h1LowBd} holds.
% \end{lemma}
% \begin{proof}
% The function $h_1(t)$ is concave on $\R_+$ because
% \[
% h_1''(t) = k'(t)=-(2\pi\nu)^{-d}\int_{\R^d}\hat{f}(\ud \xi) \frac{\nu}{2}|\xi|^2\exp\left(-\frac{\nu t}{2}|\xi|^2\right)<0.
% \]
% Hence, apply the lower bound in \eqref{E:TwoGamma} with $\alpha=1$ and $1/2$ replaced by $1$,
% \begin{align*}
%  \sum_{n=0}^\infty \gamma^n h_1(t/n)^n &\ge
% \sum_{n=\Ceil{t}}^\infty \gamma^n h_1(t/n)^n\ge
% \sum_{n=\Ceil{t}}^\infty \gamma^n\left(\frac{h_1(a)t}{a n}\right)^n
% \ge
% \sum_{n=\Ceil{t}}^\infty \frac{\sqrt{\pi}}{n!}\left(\frac{\gamma h_1(a)t}{a e}\right)^n.
% \end{align*}
% Then apply the well-known result $\sum_{n=m}^\infty \frac{m^n}{n!} \ge \frac{e^m}{2}$.
% The rest is clear.
% \end{proof}

\begin{lemma}
 For all $y\in\R^d$, we have that
\[
\lim_{t\rightarrow\infty} h_1(t)<\infty \quad\Longleftrightarrow\quad
\lim_{t\rightarrow\infty} h_1(t,y)<\infty.
\]
\end{lemma}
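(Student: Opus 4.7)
The plan is straightforward once we rewrite both quantities as integrals of $k$ against simple weights. Recall that $h_0(s,y)=1$, so by a change of variables
\[
h_1(t,y)=\int_0^t k(t-s)\,T_{\nu/4}(t-s,y)\,\ud s=\int_0^t k(s)\,T_{\nu/4}(s,y)\,\ud s,
\]
while $h_1(t)=\int_0^t k(s)\,\ud s$. Both integrands are nonnegative, so the limits as $t\to\infty$ exist in $[0,\infty]$.

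The forward implication ($h_1(\infty)<\infty\Rightarrow h_1(\infty,y)<\infty$) is immediate from the pointwise bound $T_{\nu/4}(s,y)=\exp(-4|y|^2/(\nu s))\le 1$. For the converse, the key observation is that the Gaussian weight $T_{\nu/4}(s,y)$ only degenerates near $s=0$, while Dalang's condition already guarantees local integrability of $k$. Concretely, fix $s_0=s_0(y):=8|y|^2/\nu$ so that for every $s\ge s_0$ one has $T_{\nu/4}(s,y)\ge e^{-1/2}$. This gives the one-sided inequality
\[
h_1(\infty,y)\;\ge\;e^{-1/2}\int_{s_0}^{\infty}k(s)\,\ud s\;=\;e^{-1/2}\bigl(h_1(\infty)-h_1(s_0)\bigr).
\]

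It remains to argue that $h_1(s_0)<\infty$. This follows from Dalang's condition: the Laplace-transform identity used in the proof of Lemma \ref{L:EstHt} gives $\int_{\R_+}e^{-\beta t}k(t)\,\ud t=(2/\nu)\,\Upsilon(2\beta/\nu)<\infty$ for every $\beta>0$, so $k\in L^1_{\text{loc}}(\R_+)$ and therefore $h_1(s_0)<\infty$. Combining this with the displayed inequality shows that $h_1(\infty,y)<\infty$ forces $h_1(\infty)<\infty$, completing the equivalence.

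The hard part, if any, is just identifying the correct threshold $s_0(y)$; no delicate estimate is required because the exponential weight $T_{\nu/4}$ is bounded away from zero on $[s_0,\infty)$, and Dalang's condition disposes of the complementary region $[0,s_0]$ trivially.
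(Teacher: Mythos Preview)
Your proof is correct and follows essentially the same approach as the paper: both bound $h_1(t,y)\le h_1(t)$ for the forward direction, and for the converse both cut the integral at some threshold $\epsilon>0$ (the paper leaves $\epsilon$ arbitrary, you pick $s_0(y)=8|y|^2/\nu$) and use that $T_{\nu/4}(s,y)$ is bounded below on $[\epsilon,\infty)$ together with $h_1(\epsilon)<\infty$. Your version is slightly more detailed in that you spell out why $h_1(s_0)<\infty$ via Dalang's condition, which the paper leaves implicit.
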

\begin{proof}
 Because $h_1(t,y)\le h_1(t)$, the ``if'' part is clear.
On the other hand, for any $\epsilon\in \:]0,t[\:$,
 \begin{align*}
h_1(t,y) & \ge \int_\epsilon^t \ud s \: k(s) T_{\nu/4}(s,y)
\ge T_{\nu/4}(\epsilon,y)\left[h_1(t)-h_1(\epsilon)\right].
 \end{align*}
This proves the lemma.
\end{proof}

Define
\[
H_\nu^*(t,y;\gamma):= \sum_{n=0}^\infty \gamma^n h_1(t/n,y)^n.
\]
\begin{lemma}\label{L:h1LowBd2}
(1) For all $t\ge 0$, $y\in\R^d$ and $\gamma>0$, $H_\nu(t,y;\gamma) \ge H_\nu^*(t,y;\gamma)$.
\\
(2) For all $a>0$ and $y\in\R^d$, if $\gamma\ge e/h_1(a,y)$, then
\[
H_\nu^*(t,y;\gamma)
\ge \frac{e^{t/a}-1}{e-1},\quad \text{for all $t\ge 0$.}
\]
(3) If $\lim_{t\rightarrow\infty} h_1(t)=\infty$, then for all $\gamma>0$ and all $y\in\R^d$,
we have that
\[
H_\nu^*(t,y;\gamma)
\ge \frac{e^{t/a}-1}{e-1},\quad \text{for all $t\ge 0$,}
\]
where $a>0$ is the value such that $h_1(a,y)=e/\gamma$.
\end{lemma}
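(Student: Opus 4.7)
The plan is to handle the three parts in order, with the simplex representation of $h_n(t,y)$ serving as the main engine for part (1), after which parts (2)--(3) follow from straightforward monotonicity.

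\textbf{Part (1).} I would first establish, by induction on $n$ from the recursion \eqref{E:Def-hny}, the representation
$$h_n(t,y) = \int_{\Delta_n(t)} \prod_{i=1}^n g(s_i,y)\, ds_1\cdots ds_n, \qquad g(s,y):=k(s)T_{\nu/4}(s,y),$$
where $\Delta_n(t) := \{(s_1,\dots,s_n)\in\R_+^n : s_1+\cdots+s_n \le t\}$ is the scaled $n$-simplex. The base case $n=1$ is immediate, and the inductive step follows by the change of variable $s_n = t-s$ in the outer integral, which combines $s_n$ with the $(n-1)$-simplex constraint $\sum_{i<n} s_i \le t-s_n$ to produce exactly $\Delta_n(t)$. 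Since $g \ge 0$ and the cube $[0,t/n]^n$ lies inside $\Delta_n(t)$ (each coordinate at most $t/n$ forces the sum at most $t$), positivity of the integrand gives
$$h_n(t,y) \ge \int_{[0,t/n]^n} \prod_{i=1}^n g(s_i,y)\, ds = h_1(t/n,y)^n.$$
Multiplying by $\gamma^n$ and summing over $n\ge 0$ (with the convention $h_1(t/0,y)^0:=1$, matching $h_0(t,y)=1$) yields $H_\nu(t,y;\gamma) \ge H_\nu^*(t,y;\gamma)$.

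\textbf{Part (2).} For every integer $n$ with $0 \le n \le t/a$, one has $t/n \ge a$, so the monotonicity in Lemma \ref{L:hnInc} together with the hypothesis $\gamma \ge e/h_1(a,y)$ gives $\gamma\, h_1(t/n,y) \ge \gamma\, h_1(a,y) \ge e$. Dropping the remaining nonnegative terms of the series,
$$H_\nu^*(t,y;\gamma) \ge \sum_{n=0}^{\lfloor t/a \rfloor} e^n = \frac{e^{\lfloor t/a \rfloor + 1} - 1}{e-1} \ge \frac{e^{t/a}-1}{e-1},$$
since $\lfloor t/a \rfloor + 1 \ge t/a$.

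\textbf{Part (3).} The map $t \mapsto h_1(t,y) = \int_0^t g(s,y)\, ds$ is continuous with $h_1(0,y)=0$, and the hypothesis $\lim_{t\to\infty} h_1(t) = \infty$ combined with the preceding lemma forces $\lim_{t\to\infty} h_1(t,y) = \infty$. The intermediate value theorem then produces an $a>0$ with $h_1(a,y) = e/\gamma$, and part (2) applied with this $a$ closes the argument. The only genuinely nontrivial step is the simplex identity in part (1); once it is available the cube-in-simplex bound and the rest of the argument are entirely elementary.
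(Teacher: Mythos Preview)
Your proof is correct and follows essentially the same approach as the paper: the paper states part (1) tersely as ``$h_n(t,y)\ge h_1(t/n,y)^n$, which is true by induction,'' and your simplex representation together with the cube-in-simplex inclusion is a clean, explicit way to carry out that induction; parts (2) and (3) match the paper's argument essentially line for line.
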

\begin{proof}
(1) This is because $h_n(t,y)\ge h_1(t/n,y)^n$ for $n\in\bbN$, which is true by induction.\\
(2) Fix $a>0$ and $y\in\R^d$. Note that $h_1(t,y)$ is nondecreasing.
So when $h_1(a,y)>e/\gamma$,
\[
  \sum_{n=0}^\infty \gamma^n h_1(t/n,y)^n \ge\sum_{n=0}^{t/a} \gamma^n h_1(t/n,y)^n
 \ge
\sum_{n=0}^{t/a} \gamma^n h_1(a,y)^n
\ge \frac{e^{\Floor{t/a}+1}-1}{e-1}
\ge \frac{e^{t/a}-1}{e-1}.
\]
(3) Fix arbitrary $\gamma >0$ and $y\in\R^d$.
One can find $a>0$ such that $h_1(a,y)=e/\gamma$.
Then apply the same arguments as those in (2).
\end{proof}

\bigskip

\begin{proof}[Proof of Theorem \ref{T:Phase}]
Fix $\mu\ge 0$.
We first note that if both $\rho_1$ and $\rho_2$ satisfy \eqref{E:lip} and $\rho_1(x)\le \rho_2(x)$ for all $x\in\R$,
then the second moments of the corresponding solutions $u_1(t,x)$ and $u_2(t,x)$ both starting from $\mu$
satisfy the following comparison relation:
\[
\Norm{u_1(t,x)}_2\le \Norm{u_2(t,x)}_2.
\]
Note that $\Norm{\cdot}_p$ denotes the $L^p(\Omega)$-norm.
% \textcolor{red}{(We may need to define this earlier.)}
% \textcolor{blue}{(Yes.)}
This is clear from \Itos isometry \eqref{E:Recursion}. Hence, in the following, we need only consider the linear case: $\sigma(x)=\lambda x$ with $\lambda>0$.

We start with the case  where $\Upsilon(0)<\infty$. From \eqref{E:L1}, we know that
\begin{align*}
\calL_1(t,x,x';y)&\le
 (2\pi)^{-d}\: G(t,x)G(t,x')
\int_0^\infty \ud s \int_{\R^d}  \hat{f}(\ud \xi)\exp\left(-\frac{\nu s}{4}|\xi|^2\right)\\
&=\frac{4 }{ \nu (2\pi)^{d}}  G(t,x)G(t,x') \int_{\R^d} \frac{\hat{f}(\ud \xi)}{|\xi|^2} .
% &= G(t,x)G(t,x')\frac{4 }{ \nu} \Upsilon(0).
\end{align*}
Denote $\theta:=\frac{4 }{ \nu (2\pi)^{d}} \Upsilon(0)$.  Hence, by induction,
\[
\calL_n(t,x,x';y)\le \theta^n G(t,x)G(t,x')
\]
and if $\lambda^2\theta< 1$, i.e.,
\[
\lambda\le 2^{-1}(2\pi)^{d/2}\nu^{1/2} \Upsilon(0)^{-1/2}  =: \underline{\lambda}_{c},
\]
then
\[
\calK(t,x,x';y)\le G(t,x)G(t,x') \frac{1}{1-\theta\lambda^2}.
\]
By \eqref{E:SecMom-Up}, for all $\mu\in\calM_H(\R^d)$ with $\mu\ge 0$,
\[
\Norm{u(t,x)}_2^2 \le J_0^2(t,x)\frac{1}{1-\theta\lambda^2}.
\]
Since $\mu$ satisfies \eqref{E:InitGrow}, for all $\beta>0$,
\[
J_0(t,x)\le \left(\sup_{y\in\R^d} G(t,x-y)e^{\beta|y|}\right)\int_{\R^d} e^{-\beta|y|}\mu(\ud y).
\]
Notice that
\begin{align*}
G(t,x-y)e^{\beta|y|}
& \le \prod_{i=1}^d \frac{1}{\sqrt{2\pi\nu t}}\exp\left(-\frac{(x_i-y_i)^2}{2\nu t} + \frac{\beta}{\sqrt{d}} |y_i|\right)\\
&= (2\pi\nu t)^{-d/2}\exp\left(\frac{\beta^2\nu }{2} t+ \frac{\beta}{\sqrt{d}}\sum_{i=1}^d |x_i|\right).
\end{align*}
Therefore,
\[
\sup_{x\in\R^d}\overline{m}_2(x)\le \beta^2 \nu \quad\text{for all $\beta>0$,}
\]
which implies that $\sup_{x\in\R^d}\overline{m}_2(x)=0$.
On the other hand, part (2) of Lemma \ref{L:h1LowBd2} shows that when $\lambda$ is sufficiently large,
then $\inf_{x\in\R^d}\underline{m}_2(x)>0$.

When $\Upsilon(0)=\infty$, the moment bound \eqref{E:SecMom}, Lemma \ref{L:calK} and part (3) of Lemma \ref{L:h1LowBd2}
together imply that $\inf_{x\in\R^d}\underline{m}_2(x)>0$.
The statement $m_1(x)\equiv 0$ is due to \eqref{E:WeakCom}.

The equivalence between \eqref{E:iff1} and \eqref{E:iff3} is due to \eqref{E:h1Upsilon}.
The implication ``\eqref{E:iff2}$\Rightarrow$\eqref{E:iff3}'' is because that
\[
\lim_{t\rightarrow\infty}h_1(t) = (2\pi)^{-d} \int_0^\infty \ud t \int_{\R^d} \hat{f}(\ud \xi) \exp\left(-\frac{\nu t}{2}|\xi|^2\right)
=\frac{2}{\nu (2\pi)^d} \int_{\R^d}\frac{\hat{f}(\ud \xi)}{|\xi|^2}.
\]
On the other hand, if $d\le 2$, then Lemma \ref{L:LowInd} implies that \eqref{E:iff3} fails.
This proves the implication ``\eqref{E:iff3}$\Rightarrow$\eqref{E:iff2}''.
This completes the whole proof of Theorem \ref{T:Phase}.
\end{proof}

\section{Intermittency fronts: proof of Theorem \ref{T:Front}}\label{S:Front}
We first consider the following lemma which will be used in the proof of Theorem \ref{T:Front}:
\begin{lemma}\label{L:InitBeta}
If $\mu\ge 0$ satisfies \eqref{E:muBeta} for some $\beta>0$,  then we have
\[
J_0^2(t,x)\le  C^2 (2\pi\nu t)^{-d} \exp\left(-\frac{2\beta}{\sqrt{d}}|x| +\nu \beta^2 t\right),
\]where $C=\int_{\R^d}e^{\beta|x|}\mu(\ud x)$.
\end{lemma}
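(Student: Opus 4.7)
The plan is to reduce the inequality to a pointwise estimate on $G(t,x-y)\, e^{-\beta|y|}$ by factoring the integrand defining $J_0$. Writing
\[
J_0(t,x) = \int_{\R^d} \bigl[G(t,x-y)\, e^{-\beta|y|}\bigr]\, e^{\beta|y|}\,\mu(\ud y) \le C\cdot \sup_{y\in\R^d} G(t,x-y)\, e^{-\beta|y|},
\]
the lemma will follow upon squaring, provided I can show
\[
\sup_{y\in\R^d} G(t,x-y)\, e^{-\beta|y|} \;\le\; (2\pi\nu t)^{-d/2}\, \exp\!\left(-\frac{\beta}{\sqrt d}\,|x| + \frac{\nu\beta^2}{2}\,t\right).
\]

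To obtain this supremum bound I would exploit the product structure of the Gaussian. By Cauchy--Schwarz, $|y| \ge \sum_{i=1}^d |y_i|/\sqrt d$, hence $e^{-\beta|y|} \le \prod_i e^{-(\beta/\sqrt d)|y_i|}$, so
\[
G(t,x-y)\, e^{-\beta|y|} \;\le\; \prod_{i=1}^{d} \frac{1}{\sqrt{2\pi\nu t}}\exp\!\left(-\frac{(x_i-y_i)^2}{2\nu t} - \frac{\beta}{\sqrt d}\,|y_i|\right).
\]
Each one-dimensional factor can then be maximized separately. Applying the triangle inequality $|y_i| \ge |x_i| - |x_i - y_i|$ to drop the $|y_i|$ dependence in favor of $|x_i|$ and $|x_i-y_i|$, then invoking the AM--GM step $(\beta/\sqrt d)|x_i - y_i| \le (x_i - y_i)^2/(2\nu t) + \nu\beta^2 t/(2d)$ to absorb the cross term into the Gaussian, gives per-coordinate bound $\exp\!\bigl(-(\beta/\sqrt d)|x_i| + \nu\beta^2 t/(2d)\bigr)$.

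Taking the product of the $d$ one-dimensional bounds combines the Gaussian normalizations into $(2\pi\nu t)^{-d/2}$, adds the per-coordinate quadratic-in-$t$ terms to $\nu\beta^2 t/2$, and yields $-(\beta/\sqrt d)\sum_i |x_i|$ in the exponent. The elementary inequality $\sum_i |x_i| \ge |x|$ replaces this by $-(\beta/\sqrt d)|x|$, giving exactly the required sup estimate; squaring produces the claimed bound on $J_0^2(t,x)$. The argument is just a short chain of elementary inequalities (Cauchy--Schwarz, triangle, AM--GM), so there is no real obstacle; the only thing to watch is that the dimensional factors $\sqrt d$ in Cauchy--Schwarz and in AM--GM are chosen consistently so that the coefficient of $t$ collapses cleanly to $\nu\beta^2/2$.
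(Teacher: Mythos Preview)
Your argument is correct and follows essentially the same route as the paper: both factor $e^{-\beta|y|}\le \prod_i e^{-(\beta/\sqrt d)|y_i|}$ via Cauchy--Schwarz, reduce to a one-dimensional Gaussian estimate in each coordinate, and then use $\sum_i|x_i|\ge|x|$ at the end. The only difference is cosmetic: the paper outsources the per-coordinate bound to \cite[Lemma~4.4]{ChenDalang13Heat}, whereas you spell it out explicitly via the triangle inequality and AM--GM.
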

\begin{proof}
Notice that
\[
\frac{|y_1|+\cdots+|y_d|}{\sqrt{d}}\le |y|=\sqrt{y_1^2+\cdots+y_d^2}\le |y_1|+\cdots+|y_d|.
\]
By  the same arguments as the proof of \cite[Lemma 4.4]{ChenDalang13Heat} with $\beta$ replaced by $\beta/\sqrt{d}$,
\[
J_0^2(t,x)\le C^2 (2\pi\nu t)^{-d}  \prod_{i=1}^d \exp\left(-\frac{2\beta}{\sqrt{d}}|x_i|+\frac{\nu\beta^2}{d} t\right)
\le C^2 (2\pi\nu t)^{-d} \exp\left(-\frac{2\beta}{\sqrt{d}}|x| +\nu \beta^2 t\right).
\]
\end{proof}

\begin{proof}[Proof of Theorem \ref{T:Front}]
We first prove the upper bound. By \eqref{E:SecMom2} and \eqref{E:CalK-Up},
\[
\Norm{u(t,x)}_2^2\le  \Lip_\rho^{-2} J_0^2(t,x) \exp\left(\theta t\right),
\]
where $\theta:=\theta(\nu,\Lip_\rho)$ is defined in \eqref{E:Var}. Hence, by Lemma \ref{L:InitBeta}, for $\alpha>0$,
\[
\sup_{|x|>\alpha t} \Norm{u(t,x)}_2^2
\le \Lip_\rho^{-2} (2\pi\nu t)^{-d} \exp\left(-\frac{2\beta}{\sqrt{d}}\alpha t +\nu \beta^2 t+\theta t\right),
\]
where $C:=\int_{\R^d}e^{\beta|x|}\mu(\ud x)$. Now, the exponential growth rate are
\[
-\frac{2\beta}{\sqrt{d}}\alpha t+\nu \beta^2 t +\theta t<0\quad\Longleftrightarrow\quad \alpha> \frac{\sqrt{d}}{2}\left(\nu \beta +\frac{\theta}{\beta}\right),
\]
which proves the upper bound.
% Therefore, $\overline{\lambda}(2)\le \frac{\sqrt{d}}{2}\left(\nu \beta +\frac{\theta}{\beta}\right)$.

Now we consider the lower bound.
Denote $\kappa:=(2\sqrt{3}\:)^{-d}$.
By \eqref{E:SecMom-Lower} and \eqref{E:CalK-Low},
\begin{align*}
 \Norm{u(t,x)}_2^2\ge \lip_\rho^{-2}
\iint_{\R^{2d}}\mu(\ud z)\mu(\ud z')\;
\calL_0(t,x-z,x-z') T_\nu(t,z-z') H_\nu\left(t/2,z-z';\kappa\lip_\rho^{2}\right).
\end{align*}
Fix a constant $a>0$ such that $\int_{[-a,a]^d}\mu(\ud z)>0$.
Denote $\vec{a}=(a,\cdots,a)\in\R^d$.
For $z$ and $z'\in [-a,a]^d$, we have that
\[
T_\nu(t,z-z')\ge T_{\nu}(t,2\vec{a}),
% \exp\left(-\frac{d (2a)^2}{\nu t}\right),
\]
and
\[
H_\nu\left(t/2,z-z';\kappa\lip_\rho^2\right)\ge
H_\nu\left(t/2,2\vec{a};\kappa\lip_\rho^2\right).
\]
Notice that
\begin{align*}
\calL_0(t,x-z,x-z') &\ge (2\pi\nu t)^{-d}
\exp\left(-\frac{2|x|^2+|z|^2+|z'|^2}{\nu t}\right)\\
&=2^{-d}\calL_0(t/2,z,z') T_{\nu}(t/2, x).
\end{align*}
Thus,
\[
\Norm{u(t,x)}_2^2\ge \lip_\rho^{-2}
C_t^2\;
T_\nu(t/2,x) H_\nu\left(t/2,2\vec{a};\kappa\lip_\rho^2\right).
\]
where
\[
C_t=\int_{[-a,a]^d}G(t/2,z)\mu(\ud z).
\]
Hence, for $\alpha >0$,
\[
\sup_{|x|\ge \alpha t}\Norm{u(t,x)}_2^2\ge
\lip_\rho^{-2}C_t^2\;
\exp\left(-\frac{2\alpha^2 t}{\nu}\right) H_\nu\left(t/2,2\vec{a};\kappa\lip_\rho^2\right),
\]
and
\[
\mathop{\lim\inf}_{t\rightarrow+\infty}
\frac{1}{t}
\sup_{|x|\ge \alpha t}\log \Norm{u(t,x)}_2^2\ge
-\frac{2\alpha^2}{\nu}+
\mathop{\lim\inf}_{t\rightarrow\infty}\frac{1}{t}\log H_\nu\left(t/2,2\vec{a};\kappa\lip_\rho^2\right).
\]
Therefore, by part (1) of Lemma \ref{L:h1LowBd2},
\[
\underline{\lambda}(2)
\ge
\left(
\nu
\mathop{\lim\inf}_{t\rightarrow\infty}\frac{1}{t}
\log H_\nu^*\left(t,2\vec{a};\kappa\lip_\rho^2\right)\right)^{1/2}.
\]
Then apply Lemma \ref{L:h1LowBd2} for the above limit.
This completes the proof of Theorem \ref{T:Front}.
\end{proof}

\appendix
\section{Appendix: Examples}\label{SA:ST-Con}
\begin{example}(Riesz kernels)
Suppose $f(z)=|z|^{-\alpha}$ with $\alpha\in \:]0,2\wedge d[\:$. Then
\begin{align*}
k(t) &= (2\pi\nu t)^{-d/2}\int_0^{\infty} \exp\left(-\frac{r^2}{2\nu t}\right) r^{-\alpha+d-1} \frac{\pi^{d/2}}{\Gamma(1+d/2)}\ud r = C_{\alpha,d} \:t^{-\alpha/2},
\end{align*}
with $C_{\alpha,\nu,d}:=\frac{\nu^{-\alpha /2}}{2^{1+\alpha/2}}
\frac{\Gamma\left((d-\alpha)/2\right)}{\Gamma\left(1+d/2\right)}$, and
\[
h_1(t)= C_{\alpha,\nu,d}^*
\:t^{1-\alpha/2},\qquad \Upsilon(\beta)=C_{\alpha,\nu,d}' \beta^{-1+\alpha/2},
\]
for some constants $C_{\alpha,\nu,d}^*=\frac{\nu^{-\alpha /2}}{2^{\alpha/2}(2-\alpha)}
\frac{\Gamma\left((d-\alpha)/2\right)}{\Gamma\left(1+d/2\right)}$ and $C_{\alpha,\nu,d}'>0$.
By induction,
\[
h_n(t) = C_{\alpha,\nu,d}^n \frac{t^{n(1-\alpha/2)}\Gamma(1-\alpha/2)^n}{\Gamma(n(1-\alpha/2)+1)},\quad\text{for all $n\ge 0$},
\]
and hence
\[
H_\nu(t;\lambda^2)=E_{1-\alpha/2,1}\left(\lambda^2 C_{\alpha,\nu,d} \: \Gamma(1-\alpha/2) t^{1-\alpha/2}\right),
\]
where $E_{\alpha,\beta}(z)$ is the {\it Mittag-Leffler} function with two parameters
\[
E_{\alpha,\beta}(z):=\sum_{n=0}^\infty \frac{z^n}{\Gamma( \alpha n+\beta)},\quad \Re \alpha>0, \;\beta\in \mathbb{C}, \; z\in\mathbb{C};
\]
see, e.g., \cite{Podlubny99FDE}.
The following asymptotic expansions are useful: As $|z|\rightarrow\infty$,
\begin{align}\label{E:MLA}
E_{\alpha,\beta}(z)\sim \frac{1}{\alpha}\exp\left(z^{1/\alpha}\right)-\sum_{k=1}^\infty \frac{z^{-k}}{\Gamma(\beta-\alpha k)},\quad \text{if  $0<\alpha<2$ and $|\arg z|<\alpha \pi/2$,}
\end{align}
we have that
\[
\lim_{t\rightarrow\infty}\frac{1}{t}\log H_\nu(t;\lambda^2)= \left[C_{\alpha,\nu,d} \: \Gamma(1-\alpha/2)\right]^{\frac{2}{2-\alpha}} \: \lambda^{\frac{4}{2-\alpha}} .
\]
By Lemma \ref{L:h1LowBd2},
\[
\mathop{\lim\inf}_{t\rightarrow\infty}\frac{1}{t}\log H_\nu^*(t;\lambda^2) \ge C_{\alpha,\nu,d}^\star \lambda^{\frac{4}{2-\alpha}},\qquad
\text{with}\quad
C_{\alpha,\nu,d}^\star=\left(\frac{C_{\alpha,\nu,d}^*}{e}\right)^{\frac{2}{2-\alpha}}.
\]
% Hence, no phase transition happens for the Riesz kernel case.
\end{example}

\begin{example}(Ornstein-Uhlenbeck-type kernels)\label{Eg:OU}
Suppose $f(z)=\exp\left(-|z|^\alpha \right)$ for $\alpha \in \:]0,2]$.
The case when $\alpha=2$ has closed forms:
\[
k(t)= (2\pi\nu t)^{-d/2}\int_0^t \exp\left(-\frac{r^2}{2\nu t}-r^2\right) r^{+d-1} \frac{\pi^{d/2}}{\Gamma(1+d/2)}\ud r
=d^{-1}(1+2 \nu  t)^{-d/2},
\]
and
\[
h_1(t)=
\begin{cases}
\nu^{-1}\left(\sqrt{2 \nu  t+1}-1\right) & \text{if $d=1$,}\\
(4\nu)^{-1}\log(1+2 \nu  t)& \text{if $d=2$,}\\
\left[\nu (d-2) d \right]^{-1}\left(1-(1+2 \nu t)^{1-d/2} \right)& \text{if $d\ge 3$,}
\end{cases}
\]
and
\[
\Upsilon(\beta)=(2\pi)^{-d}\int_{\R^d}\frac{\pi^{d/2} e^{-|\xi|^2/2}}{\beta+|\xi|^2}\ud \xi
=d^{-1}  2^{-d} e^{\beta /2} \beta
   ^{\frac{d}{2}-1} \Gamma
   \left(1-\frac{d}{2},\frac{\beta
   }{2}\right),
\]
where $\Gamma(\nu, x):=\int_x^\infty t^{\nu-1}e^{-t}\ud t$ is the {\it incomplete Gamma function}.
% Hence, phase transition happens if and only if $d\ge 3$.
\end{example}

% \begin{example}(Poisson-type kernels)
% Suppose $f(z)=\left(1+|x|^2\right)^{-\beta/2}$ with $\beta> 0$, where the case $\beta=d+1$ corresponds to the standard Poisson kernel. For these kernels,
% \[
% k(t)=d^{-1}(2\nu  t)^{-d/2}
%    U\left(\frac{d}{2},\frac{1}{2} (d-\beta
%    +2),\frac{1}{2 t \nu }\right),
% \]
% and $h_1(t)=\cdots$ \textcolor{red}{Not finished yet}.
% \end{example}

\begin{example}(Brownian motion case)
When $f(z)\equiv 1$, the noise reduces to a space-independent noise. In this case,
\[
k(t)\equiv 1,\qquad h_1(t)=t,\qquad \Upsilon(\beta)=(2\pi)^{-d}\beta^{-1},
\]
and by \eqref{E:Var} and  Lemma \ref{L:h1LowBd2},
\[
\lim_{t\rightarrow\infty}\frac{1}{t}\log H_\nu(t;\lambda^2)=\frac{\lambda^2}{(2\pi)^d},
\qquad
\mathop{\lim\inf}_{t\rightarrow\infty}\frac{1}{t}\log H_\nu^*(t;\lambda^2)\ge\frac{\lambda^2}{e}.
\]
\end{example}

\begin{example}(Space-time white noise case)\label{Eg:WhiteNoise}
 When $d=1$ and $f=\delta_0$, we have that
\[
k(t)=\frac{1}{\sqrt{2\pi\nu t}},\qquad h_1(t)=\sqrt{\frac{2 t}{\pi \nu}},\qquad \Upsilon(\beta)=\frac{1}{2\sqrt{\beta}},
\]
and by \eqref{E:Var} and  Lemma \ref{L:h1LowBd2},
\[
\lim_{t\rightarrow\infty}\frac{1}{t}\log H_\nu(t;\lambda^2)=\frac{\lambda^4}{2\nu},
\qquad
\mathop{\lim\inf}_{t\rightarrow\infty}\frac{1}{t}\log H_\nu^*(t;\lambda^2)\ge\frac{2\lambda^4}{\pi\nu e^2}.
\]
\end{example}

\begin{example}(Lower bound for $d=1,2$)
When $f(x)\ge 1_{[-a,a]^d}(x)$ for some $a>0$ and $d=1,2$, then by Lemma \ref{L:LowInd} and \ref{L:h1LowBd2},
\[
\mathop{\lim\inf}_{t\rightarrow\infty}\frac{1}{t}\log H_\nu^*(t;\lambda^2)\ge \frac{\nu \pi}{2a^2}\left[\left(1+\frac{\nu\pi e}{4a^2 \lambda^2}\right)^2-1\right]^{-1}\rightarrow
\frac{\lambda^2}{e}
\quad \text{as $\lambda\rightarrow\infty$ if $d=1$,}
\]
and
\[
\mathop{\lim\inf}_{t\rightarrow\infty}\frac{1}{t}\log H_\nu^*(t;\lambda^2)\ge \frac{\nu \pi}{2a^2}\left[\exp\left(\frac{\nu\pi e}{2a^2 \lambda^2}\right)-1\right]^{-1}\rightarrow
\frac{\lambda^2}{e} \quad \text{as $\lambda\rightarrow\infty$ if $d=2$.}
\]
\end{example}

% \newpage
\section{Associative property of the convolution ``$\rhd$''}

\begin{lemma}\label{L:Associative}
Let $h$, $w$, and $g$ be three real-valued functions defined on $\R_+\times\R^{3d}$.
Suppose that
$\left( h \rhd \left(w\rhd g\right) \right)(t,x,x';y)$ and
$\left( \left(h\rhd w\right)\rhd g \right)(t,x,x';y)$ are well defined where $t\ge 0$, $x$, $x'$ and $y\in\R^d$. Then
\[
\left( h \rhd \left(w\rhd g\right) \right)(t,x,x';y)
=
\left( \left(h\rhd w\right)\rhd g \right)(t,x,x';y).
\]
\end{lemma}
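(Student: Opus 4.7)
The plan is to expand both sides using the definition \eqref{E:RHD-BACK} of $\rhd$, rewrite the iterated quadruple integral on the right-hand side by a simple change of spatial variables, and then interchange the order of the two time integrals using Fubini's theorem (whose applicability is guaranteed by the hypothesis that both expressions are well defined).

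First, I will write out the left-hand side explicitly:
\begin{align*}
\bigl(h\rhd(w\rhd g)\bigr)(t,x,x';y)
= \int_0^t \ud s \int_0^s \ud r \iint_{\R^{2d}}\ud z\ud z' \iint_{\R^{2d}}\ud w\ud w' \;
& h(t-s,x-z,x'-z';y-(z-z')) \\
\times\;
& w(s-r,z-w,z'-w';y-(w-w')) \\
\times\;
& g(r,w,w';y)\, f(y-(z-z'))\, f(y-(w-w')).
\end{align*}
Here the inner $y$-argument of $w$ is unchanged because, by the definition of $\rhd$, the fourth argument is preserved when passing through the outer convolution. Similarly, I will expand the right-hand side: applying \eqref{E:RHD-BACK} once gives an outer integral $\int_0^t \ud r \iint \ud w\ud w'$ against $g(r,w,w';y)f(y-(w-w'))$, with the factor $(h\rhd w)$ evaluated at the shifted fourth argument $y-(w-w')$; applying \eqref{E:RHD-BACK} a second time to this expands $(h\rhd w)$ as an integral $\int_0^{t-r}\ud u \iint \ud v\ud v'$ whose $f$-factor becomes $f\bigl((y-(w-w'))-(v-v')\bigr)$.

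The key step is the substitution $z=w+v$, $z'=w'+v'$ in the inner spatial integral of the right-hand side. Since $z-z'=(w-w')+(v-v')$, the identity $y-(z-z')=(y-(w-w'))-(v-v')$ makes all the $f$-arguments and the fourth argument of $h$ align exactly with those on the left-hand side. In parallel, setting $s=r+u$ in the time variables converts $\int_0^t\ud r\int_0^{t-r}\ud u$ into $\int_0^t\ud r \int_r^t \ud s$, and the identity $t-r-u=t-s$ puts the first argument of $h$ into the required form. Finally, swapping the order of integration via Fubini yields $\int_0^t\ud s\int_0^s\ud r$, matching exactly the expression for the left-hand side.

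The bookkeeping of four spatial variables and two time variables is the only real difficulty; once the substitution $(v,v')\mapsto(z,z')$ is written down the identity is forced. I anticipate no conceptual obstacle, only the need to be careful that the $y$-shift in the fourth argument of $h\rhd w$ correctly absorbs into the cumulative shift produced by the outer convolution, which is precisely what the cancellation $y-(w-w')-(v-v')=y-(z-z')$ accomplishes. Well-definedness of both iterated integrals is what allows the use of Fubini without further integrability hypotheses.
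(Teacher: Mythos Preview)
Your argument is correct and is essentially the same as the paper's: both proofs expand the two iterated operators, perform a linear change of spatial and time variables, and identify the resulting integrals via Fubini. The only cosmetic difference is that the paper applies the reflection $\hat z_i=x-z_j$, $\hat s_i=t-s_j$ (which naturally lands on the alternative form \eqref{E:RHD-FOR} of $\rhd$), whereas you use the additive substitution $z=w+v$, $z'=w'+v'$, $s=r+u$ directly inside the definition \eqref{E:RHD-BACK}; either substitution forces the same cancellation $y-(w-w')-(v-v')=y-(z-z')$ that makes associativity work.

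One small remark on presentation: you use the symbol $w$ both for one of the three functions and for a spatial integration variable in the same display, which makes the formula hard to parse; renaming the dummy variables (say $\eta,\eta'$) would avoid the clash. Also, ``both iterated integrals are well defined'' is not literally the hypothesis of Fubini's theorem; strictly speaking one needs absolute integrability of the joint integrand, but the paper's own proof works at the same informal level, so no extra justification is expected here.
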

\begin{proof} By definition,
\begin{align*}
\left( h \rhd \left(w\rhd g\right) \right)&(t,x,x';y)\\
=&
\int_0^t\ud s_1 \iint_{\R^{2d}}\ud z_1\ud z_1' \: h(t-s_1,x-z_1,x'-z'_1;y-(z_1-z'_1))\\
&\times \left(w\rhd g\right)(s_1,z_1,z'_1;y) f(y-(z_1-z'_1))\\
=&
\int_0^t\ud s_1 \iint_{\R^{2d}}\ud z_1 \ud z'_1 \: h(t-s_1,x-z_1,x'-z'_1;y-(z_1-z'_1))f(y-(z_1-z'_1)) \\
&\times \int_0^{s_1}\ud s_2 \iint_{\R^{2d}} \ud z_2 \ud z'_2\;
w(s_1-s_2,z_1-z_2,z_1'-z_2';y-(z_2-z_2'))\\
&\times g(s_2,z_2,z_2';y)f(y-(z_2-z_2'))
\end{align*}
Then by change of variables
\begin{align*}
 \hat{s}_1=t-s_2 && \hat{z}_1=x-z_2 && \hat{z}_1'=x'-z_2'\;,
 \\
 \hat{s}_2=t-s_1 && \hat{z}_2=x-z_1 && \hat{z}_2'=x'-z_1'\;,
\end{align*}
we see that
\begin{align*}
\left( h \rhd \left(w\rhd g\right) \right)&(t,x,x';y)\\
=&
\int_0^t\ud \hat{s}_1 \iint_{\R^{2d}}\ud \hat{z}_1 \ud \hat{z}'_1 \:
g(t-\hat{s}_1,x-\hat{z}_1,x'-\hat{z}'_1;y)f(y-[(x-\hat{z}_1)-(x-\hat{z}'_1)]) \\
&\times \int_0^{\hat{s}_1}\ud \hat{s}_2 \iint_{\R^{2d}} \ud \hat{z}_2 \ud \hat{z}'_2\;
w(\hat{s}_1-\hat{s}_2,\hat{z}_1-\hat{z}_2,\hat{z}_1'-\hat{z}_2';y-[(x-\hat{z}_1)-(x'-\hat{z}_1')])\\
&\times h(\hat{s}_2,\hat{z}_2,\hat{z}_2';y-[(x-\hat{z}_2)-(x'-\hat{z}_2')])f(y-[(x-\hat{z}_2)-(x'-\hat{z}_2')])\\
=&
\int_0^t\ud \hat{s}_1 \iint_{\R^{2d}}\ud \hat{z}_1 \ud \hat{z}'_1 \:
g(t-\hat{s}_1,x-\hat{z}_1,x'-\hat{z}'_1;y)f(y-[(x-\hat{z}_1)-(x-\hat{z}'_1)]) \\
&\times \left(h\rhd w\right)(\hat{s}_1,\hat{z}_1,\hat{z}_1';y-[(x-\hat{z}_1)-(x-\hat{z}'_1)])\\
=& \left(\left(h\rhd w\right)\rhd g\right)(t,x,x';y).
\end{align*}
This completes the proof of Lemma \ref{L:Associative}.
\end{proof}

\section*{Acknowledgements}
\addcontentsline{toc}{section}{Acknowledgements}
This work was mostly done when both authors were at the University of Utah in the year of 2014.
Both authors appreciate many useful comments from and discussions with {\it Davar Khoshnevisan}.
The first author thanks {\it Daniel Conus} for many stimulating discussions on this problem when
the first author was invited to give a colloquium talk at Lehigh university in Oct. 2014.
The non-symmetric operator ``$\rhd$'' was studied in the first author's thesis.
He would like to express his gratitude to {\it Robert C. Dalang} for his careful reading of his thesis.

% \newpage
\begin{small}
\bigskip

\begin{minipage}[t]{0.5\textwidth}
\noindent\textbf{Le Chen},\\
% \noindent Current address:\\
\noindent Department of Mathematics, \\
\noindent University of Kansas,\\
\noindent 405 Snow Hall, 1460 Jayhawk Blvd\\
\noindent Lawrence, Kansas, 66045-7594\\
% \noindent\emph{Emails} \& \emph{URLs}:\\
\noindent\emph{Email}: \texttt{chenle@ku.edu}\\
\noindent\emph{URL}: \url{http://www.math.ku.edu/u/chenle}
\end{minipage}
% \begin{minipage}[t]{0.5\textwidth}
% \noindent\textbf{Davar Khoshnevisan}\\
% \noindent Department of Mathematics,\\
% \noindent University of Utah,\\
% \noindent Salt Lake City, UT 84112-0090\\
% \noindent\emph{Email}: \texttt{davar@math.utah.edu}\\
% \noindent\emph{URL}: \url{http://www.math.utah.edu/~davar}
% \end{minipage}
% 
% \bigskip
\begin{minipage}[t]{0.5\textwidth}
\noindent\textbf{Kunwoo Kim}\\
\noindent Mathematical Sciences Research Institute\\
\noindent 17 Gauss Way, \\
\noindent Berkeley, CA 94720\\
\noindent\emph{Email}: \texttt{kunwookim@msri.org}\\
\noindent\emph{URL}: \url{http://www.math.utah.edu/~kkim}
\end{minipage}
\end{small}

\end{document}